\numberwithin{equation}{section}
\setlist{leftmargin=3\parindent,labelindent=3\parindent}
\setlist[enumerate]{%
  leftmargin=3\parindent,%
  align=left,%
  labelwidth=3\parindent,%
  labelsep=0pt%
}
\setlist[enumerate,1]{%
  label={\normalfont (\thesection.\arabic{equation})}, ref={\normalfont \thesection.\arabic{equation}},
  resume%
}
\newtheorem{thm}[equation]{Theorem}
\newtheorem{cor}[equation]{Corollary}
\newtheorem{lem}[equation]{Lemma}
\newtheorem{conj}[equation]{Conjecture}
\theoremstyle{definition}
\newtheorem{defn}[equation]{Definition}
\newtheorem{rem}[equation]{Remark}
\newtheorem{obs}[equation]{Observation}
\newtheorem*{ack}{Acknowledgements}
\newtheorem{ques}[equation]{Question}
\newtheorem{prob}[equation]{Problem}
\theoremstyle{remark}
\newcommand{\newConj}[2]{
  \expandafter\newcommand\csname conjText#1\endcsname{#2}
\begin{conj}
\label{conj:#1}
\csname conjText#1\endcsname
\end{conj}
}
\newcommand{\newQues}[2]{
  \expandafter\newcommand\csname quesText#1\endcsname{#2}
\begin{ques}
\label{ques:#1}
\csname quesText#1\endcsname
\end{ques}
}
\newcommand{\newProb}[2]{
  \expandafter\newcommand\csname probText#1\endcsname{#2}
\begin{prob}
\label{prob:#1}
\csname probText#1\endcsname
\end{prob}
}
\newcommand{\newThm}[2]{
  \expandafter\newcommand\csname thmText#1\endcsname{#2}
\begin{thm}
\label{th:#1}
\csname thmText#1\endcsname
\end{thm}
}
\newcommand{\repeatConj}[1]{
\csname theoremstyle\endcsname{plain}
\csname newtheorem\endcsname*{#1ConjRepeat}{Conjecture~\csname ref\endcsname{conj:#1}}
\csname begin\endcsname{#1ConjRepeat}
\csname conjText#1\endcsname
\csname end\endcsname{#1ConjRepeat}
}
\newcommand{\repeatQues}[1]{
\csname theoremstyle\endcsname{definition}
\csname newtheorem\endcsname*{#1QuesRepeat}{Question~\csname ref\endcsname{ques:#1}}
\csname begin\endcsname{#1QuesRepeat}
\csname quesText#1\endcsname
\csname end\endcsname{#1QuesRepeat}
}
\newcommand{\repeatProb}[1]{
\csname theoremstyle\endcsname{definition}
\csname newtheorem\endcsname*{#1ProbRepeat}{Problem~\csname ref\endcsname{prob:#1}}
\csname begin\endcsname{#1ProbRepeat}
\csname probText#1\endcsname
\csname end\endcsname{#1ProbRepeat}
}
\newcommand{\repeatThm}[1]{
\csname theoremstyle\endcsname{plain}
\csname newtheorem\endcsname*{#1ThmRepeat}{Theorem~\csname ref\endcsname{th:#1}}
\csname begin\endcsname{#1ThmRepeat}
\csname thmText#1\endcsname
\csname end\endcsname{#1ThmRepeat}
}
\title{Common Pairs of Graphs}
\author{Natalie Behague\thanks{Supported by a PIMS Postdoctoral Fellowship.} }
\author{Natasha  Morrison\thanks{Research supported by NSERC Discovery Grant RGPIN-2021-02511, NSERC Early Career Supplement DGECR-2021-00047 and a Start-Up Grant from the University of Victoria.}}
\author{Jonathan A. Noel\thanks{Research supported by NSERC Discovery Grant RGPIN-2021-02460, NSERC Early Career Supplement DGECR-2021-00024 and a Start-Up Grant from the University of Victoria.}} 
\affil{\normalsize{Department of Mathematics and Statistics, University of Victoria, Victoria, B.C., Canada.}}
\affil{\texttt{\{nbehague,nmorrison,noelj\}@uvic.ca}}
\DeclareMathOperator{\Aut}{Aut}
\DeclareMathOperator{\aut}{aut}
\DeclareTextCompositeCommand{\v}{OT1}{l}{l\nobreak\hspace{-.1em}'}
\DeclareTextCompositeCommand{\v}{OT1}{t}{t\nobreak\hspace{-.1em}'\nobreak\hspace{-.15em}}
\DeclareMathOperator{\inj}{inj}
\DeclareMathOperator{\ind}{ind}
\tikzstyle{none}=[inner sep=0pt]
\definecolor{hexcolor0xf81e1c}{rgb}{0.973,0.118,0.110}
\definecolor{hexcolor0x3c00ff}{rgb}{0.235,0.000,1.000}
\tikzstyle{root}=[rectangle, fill=white,draw=black, scale=0.70]
\tikzstyle{vertex}=[circle, fill=white,draw=black, scale=0.55]
\tikzstyle{root_small}=[rectangle, fill=white,draw=black, scale=0.350]
\tikzstyle{vertex_small}=[circle, fill=white,draw=black, scale=0.35]
\tikzstyle{dashededge}=[draw=black, densely dotted]
\tikzstyle{edge}=[draw=black]
\newcommand{\ZeroZero}{
\begin{tikzpicture}[scale = 0.75]
	\begin{pgfonlayer}{nodelayer}
		\node [style=root] (2) at (330:1) {};
		\node [style=root] (1) at (90:1) {};
		\node [style=root] (0) at (210:1) {};
		\node [style=vertex] (3) at (0,0) {};
	\end{pgfonlayer}
	\begin{pgfonlayer}{edgelayer}
		\draw [style=dashededge](0.center) to (1.center);
		\draw [style=dashededge](2.center) to (1.center);
		\draw [style=dashededge](0.center) to (2.center);
		\draw [style=dashededge](3.center) to (0.center);
		\draw [style=dashededge](3.center) to (1.center);
		\draw [style=dashededge](3.center) to (2.center);
	\end{pgfonlayer}
\end{tikzpicture}}
\newcommand{\ZeroOne}{
\begin{tikzpicture}[scale = 0.75]
	\begin{pgfonlayer}{nodelayer}
		\node [style=root] (2) at (330:1) {};
		\node [style=root] (1) at (90:1) {};
		\node [style=root] (0) at (210:1) {};
		\node [style=vertex] (3) at (0,0) {};
	\end{pgfonlayer}
	\begin{pgfonlayer}{edgelayer}
		\draw [style=dashededge](0.center) to (1.center);
		\draw [style=dashededge](2.center) to (1.center);
		\draw [style=dashededge](0.center) to (2.center);
		\draw (3.center) to (0.center);
		\draw [style=dashededge](3.center) to (1.center);
		\draw [style=dashededge](3.center) to (2.center);
	\end{pgfonlayer}
\end{tikzpicture}}
\newcommand{\ZeroTwo}{
\begin{tikzpicture}[scale = 0.75]
	\begin{pgfonlayer}{nodelayer}
		\node [style=root] (2) at (330:1) {};
		\node [style=root] (1) at (90:1) {};
		\node [style=root] (0) at (210:1) {};
		\node [style=vertex] (3) at (0,0) {};
	\end{pgfonlayer}
	\begin{pgfonlayer}{edgelayer}
		\draw [style=dashededge](0.center) to (1.center);
		\draw [style=dashededge](2.center) to (1.center);
		\draw [style=dashededge](0.center) to (2.center);
		\draw [style=dashededge](3.center) to (0.center);
		\draw (3.center) to (1.center);
		\draw [style=dashededge](3.center) to (2.center);
	\end{pgfonlayer}
\end{tikzpicture}}
\newcommand{\ZeroThree}{
\begin{tikzpicture}[scale = 0.75]
	\begin{pgfonlayer}{nodelayer}
		\node [style=root] (2) at (330:1) {};
		\node [style=root] (1) at (90:1) {};
		\node [style=root] (0) at (210:1) {};
		\node [style=vertex] (3) at (0,0) {};
	\end{pgfonlayer}
	\begin{pgfonlayer}{edgelayer}
		\draw [style=dashededge](0.center) to (1.center);
		\draw [style=dashededge](2.center) to (1.center);
		\draw [style=dashededge](0.center) to (2.center);
		\draw [style=dashededge](3.center) to (0.center);
		\draw [style=dashededge](3.center) to (1.center);
		\draw (3.center) to (2.center);
	\end{pgfonlayer}
\end{tikzpicture}}
\newcommand{\ZeroFour}{
\begin{tikzpicture}[scale = 0.75]
	\begin{pgfonlayer}{nodelayer}
		\node [style=root] (2) at (330:1) {};
		\node [style=root] (1) at (90:1) {};
		\node [style=root] (0) at (210:1) {};
		\node [style=vertex] (3) at (0,0) {};
	\end{pgfonlayer}
	\begin{pgfonlayer}{edgelayer}
		\draw [style=dashededge](0.center) to (1.center);
		\draw [style=dashededge](2.center) to (1.center);
		\draw [style=dashededge](0.center) to (2.center);
		\draw (3.center) to (0.center);
		\draw (3.center) to (1.center);
		\draw [style=dashededge](3.center) to (2.center);
	\end{pgfonlayer}
\end{tikzpicture}}
\newcommand{\ZeroFive}{
\begin{tikzpicture}[scale = 0.75]
	\begin{pgfonlayer}{nodelayer}
		\node [style=root] (2) at (330:1) {};
		\node [style=root] (1) at (90:1) {};
		\node [style=root] (0) at (210:1) {};
		\node [style=vertex] (3) at (0,0) {};
	\end{pgfonlayer}
	\begin{pgfonlayer}{edgelayer}
		\draw [style=dashededge](0.center) to (1.center);
		\draw [style=dashededge](2.center) to (1.center);
		\draw [style=dashededge](0.center) to (2.center);
		\draw (3.center) to (0.center);
		\draw [style=dashededge](3.center) to (1.center);
		\draw (3.center) to (2.center);
	\end{pgfonlayer}
\end{tikzpicture}}
\newcommand{\ZeroSix}{
\begin{tikzpicture}[scale = 0.75]
	\begin{pgfonlayer}{nodelayer}
		\node [style=root] (2) at (330:1) {};
		\node [style=root] (1) at (90:1) {};
		\node [style=root] (0) at (210:1) {};
		\node [style=vertex] (3) at (0,0) {};
	\end{pgfonlayer}
	\begin{pgfonlayer}{edgelayer}
		\draw [style=dashededge](0.center) to (1.center);
		\draw [style=dashededge](2.center) to (1.center);
		\draw [style=dashededge](0.center) to (2.center);
		\draw [style=dashededge](3.center) to (0.center);
		\draw (3.center) to (1.center);
		\draw (3.center) to (2.center);
	\end{pgfonlayer}
\end{tikzpicture}}
\newcommand{\ZeroSeven}{
\begin{tikzpicture}[scale = 0.75]
	\begin{pgfonlayer}{nodelayer}
		\node [style=root] (2) at (330:1) {};
		\node [style=root] (1) at (90:1) {};
		\node [style=root] (0) at (210:1) {};
		\node [style=vertex] (3) at (0,0) {};
	\end{pgfonlayer}
	\begin{pgfonlayer}{edgelayer}
		\draw [style=dashededge](0.center) to (1.center);
		\draw [style=dashededge](2.center) to (1.center);
		\draw [style=dashededge](0.center) to (2.center);
		\draw (3.center) to (0.center);
		\draw (3.center) to (1.center);
		\draw (3.center) to (2.center);
	\end{pgfonlayer}
\end{tikzpicture}}
\newcommand{\OneZeroZero}{
\begin{tikzpicture}[scale = 0.75]
	\begin{pgfonlayer}{nodelayer}
		\node [style=root] (2) at (330:1) {};
		\node [style=root] (1) at (90:1) {};
		\node [style=root] (0) at (210:1) {};
		\node [style=vertex] (3) at (0,0) {};
	\end{pgfonlayer}
	\begin{pgfonlayer}{edgelayer}
		\draw (0.center) to (1.center);
		\draw [style=dashededge](2.center) to (1.center);
		\draw [style=dashededge](0.center) to (2.center);
		\draw [style=dashededge](3.center) to (0.center);
		\draw [style=dashededge](3.center) to (1.center);
		\draw [style=dashededge](3.center) to (2.center);
	\end{pgfonlayer}
\end{tikzpicture}}
\newcommand{\OneZeroOne}{
\begin{tikzpicture}[scale = 0.75]
	\begin{pgfonlayer}{nodelayer}
		\node [style=root] (2) at (330:1) {};
		\node [style=root] (1) at (90:1) {};
		\node [style=root] (0) at (210:1) {};
		\node [style=vertex] (3) at (0,0) {};
	\end{pgfonlayer}
	\begin{pgfonlayer}{edgelayer}
		\draw (0.center) to (1.center);
		\draw [style=dashededge](2.center) to (1.center);
		\draw [style=dashededge](0.center) to (2.center);
		\draw (3.center) to (0.center);
		\draw [style=dashededge](3.center) to (1.center);
		\draw [style=dashededge](3.center) to (2.center);
	\end{pgfonlayer}
\end{tikzpicture}}
\newcommand{\OneZeroTwo}{
\begin{tikzpicture}[scale = 0.75]
	\begin{pgfonlayer}{nodelayer}
		\node [style=root] (2) at (330:1) {};
		\node [style=root] (1) at (90:1) {};
		\node [style=root] (0) at (210:1) {};
		\node [style=vertex] (3) at (0,0) {};
	\end{pgfonlayer}
	\begin{pgfonlayer}{edgelayer}
		\draw (0.center) to (1.center);
		\draw [style=dashededge](2.center) to (1.center);
		\draw [style=dashededge](0.center) to (2.center);
		\draw [style=dashededge](3.center) to (0.center);
		\draw (3.center) to (1.center);
		\draw [style=dashededge](3.center) to (2.center);
	\end{pgfonlayer}
\end{tikzpicture}}
\newcommand{\OneZeroThree}{
\begin{tikzpicture}[scale = 0.75]
	\begin{pgfonlayer}{nodelayer}
		\node [style=root] (2) at (330:1) {};
		\node [style=root] (1) at (90:1) {};
		\node [style=root] (0) at (210:1) {};
		\node [style=vertex] (3) at (0,0) {};
	\end{pgfonlayer}
	\begin{pgfonlayer}{edgelayer}
		\draw (0.center) to (1.center);
		\draw [style=dashededge](2.center) to (1.center);
		\draw [style=dashededge](0.center) to (2.center);
		\draw [style=dashededge](3.center) to (0.center);
		\draw [style=dashededge](3.center) to (1.center);
		\draw (3.center) to (2.center);
	\end{pgfonlayer}
\end{tikzpicture}}
\newcommand{\OneZeroFour}{
\begin{tikzpicture}[scale = 0.75]
	\begin{pgfonlayer}{nodelayer}
		\node [style=root] (2) at (330:1) {};
		\node [style=root] (1) at (90:1) {};
		\node [style=root] (0) at (210:1) {};
		\node [style=vertex] (3) at (0,0) {};
	\end{pgfonlayer}
	\begin{pgfonlayer}{edgelayer}
		\draw (0.center) to (1.center);
		\draw [style=dashededge](2.center) to (1.center);
		\draw [style=dashededge](0.center) to (2.center);
		\draw (3.center) to (0.center);
		\draw (3.center) to (1.center);
		\draw [style=dashededge](3.center) to (2.center);
	\end{pgfonlayer}
\end{tikzpicture}}
\newcommand{\OneZeroFive}{
\begin{tikzpicture}[scale = 0.75]
	\begin{pgfonlayer}{nodelayer}
		\node [style=root] (2) at (330:1) {};
		\node [style=root] (1) at (90:1) {};
		\node [style=root] (0) at (210:1) {};
		\node [style=vertex] (3) at (0,0) {};
	\end{pgfonlayer}
	\begin{pgfonlayer}{edgelayer}
		\draw (0.center) to (1.center);
		\draw [style=dashededge](2.center) to (1.center);
		\draw [style=dashededge](0.center) to (2.center);
		\draw (3.center) to (0.center);
		\draw [style=dashededge](3.center) to (1.center);
		\draw (3.center) to (2.center);
	\end{pgfonlayer}
\end{tikzpicture}}
\newcommand{\OneZeroSix}{
\begin{tikzpicture}[scale = 0.75]
	\begin{pgfonlayer}{nodelayer}
		\node [style=root] (2) at (330:1) {};
		\node [style=root] (1) at (90:1) {};
		\node [style=root] (0) at (210:1) {};
		\node [style=vertex] (3) at (0,0) {};
	\end{pgfonlayer}
	\begin{pgfonlayer}{edgelayer}
		\draw (0.center) to (1.center);
		\draw [style=dashededge](2.center) to (1.center);
		\draw [style=dashededge](0.center) to (2.center);
		\draw [style=dashededge](3.center) to (0.center);
		\draw (3.center) to (1.center);
		\draw (3.center) to (2.center);
	\end{pgfonlayer}
\end{tikzpicture}}
\newcommand{\OneZeroSeven}{
\begin{tikzpicture}[scale = 0.75]
	\begin{pgfonlayer}{nodelayer}
		\node [style=root] (2) at (330:1) {};
		\node [style=root] (1) at (90:1) {};
		\node [style=root] (0) at (210:1) {};
		\node [style=vertex] (3) at (0,0) {};
	\end{pgfonlayer}
	\begin{pgfonlayer}{edgelayer}
		\draw (0.center) to (1.center);
		\draw [style=dashededge](2.center) to (1.center);
		\draw [style=dashededge](0.center) to (2.center);
		\draw (3.center) to (0.center);
		\draw (3.center) to (1.center);
		\draw (3.center) to (2.center);
	\end{pgfonlayer}
\end{tikzpicture}}
\newcommand{\TwoZeroZero}{
\begin{tikzpicture}[scale = 0.75]
	\begin{pgfonlayer}{nodelayer}
		\node [style=root] (2) at (330:1) {};
		\node [style=root] (1) at (90:1) {};
		\node [style=root] (0) at (210:1) {};
		\node [style=vertex] (3) at (0,0) {};
	\end{pgfonlayer}
	\begin{pgfonlayer}{edgelayer}
		\draw (0.center) to (1.center);
		\draw [style=dashededge](2.center) to (1.center);
		\draw (0.center) to (2.center);
		\draw [style=dashededge](3.center) to (0.center);
		\draw [style=dashededge](3.center) to (1.center);
		\draw [style=dashededge](3.center) to (2.center);
	\end{pgfonlayer}
\end{tikzpicture}}
\newcommand{\TwoZeroOne}{
\begin{tikzpicture}[scale = 0.75]
	\begin{pgfonlayer}{nodelayer}
		\node [style=root] (2) at (330:1) {};
		\node [style=root] (1) at (90:1) {};
		\node [style=root] (0) at (210:1) {};
		\node [style=vertex] (3) at (0,0) {};
	\end{pgfonlayer}
	\begin{pgfonlayer}{edgelayer}
		\draw (0.center) to (1.center);
		\draw [style=dashededge](2.center) to (1.center);
		\draw (0.center) to (2.center);
		\draw (3.center) to (0.center);
		\draw [style=dashededge](3.center) to (1.center);
		\draw [style=dashededge](3.center) to (2.center);
	\end{pgfonlayer}
\end{tikzpicture}}
\newcommand{\TwoZeroTwo}{
\begin{tikzpicture}[scale = 0.75]
	\begin{pgfonlayer}{nodelayer}
		\node [style=root] (2) at (330:1) {};
		\node [style=root] (1) at (90:1) {};
		\node [style=root] (0) at (210:1) {};
		\node [style=vertex] (3) at (0,0) {};
	\end{pgfonlayer}
	\begin{pgfonlayer}{edgelayer}
		\draw (0.center) to (1.center);
		\draw [style=dashededge](2.center) to (1.center);
		\draw (0.center) to (2.center);
		\draw [style=dashededge](3.center) to (0.center);
		\draw (3.center) to (1.center);
		\draw [style=dashededge](3.center) to (2.center);
	\end{pgfonlayer}
\end{tikzpicture}}
\newcommand{\TwoZeroThree}{
\begin{tikzpicture}[scale = 0.75]
	\begin{pgfonlayer}{nodelayer}
		\node [style=root] (2) at (330:1) {};
		\node [style=root] (1) at (90:1) {};
		\node [style=root] (0) at (210:1) {};
		\node [style=vertex] (3) at (0,0) {};
	\end{pgfonlayer}
	\begin{pgfonlayer}{edgelayer}
		\draw (0.center) to (1.center);
		\draw [style=dashededge](2.center) to (1.center);
		\draw (0.center) to (2.center);
		\draw [style=dashededge](3.center) to (0.center);
		\draw [style=dashededge](3.center) to (1.center);
		\draw (3.center) to (2.center);
	\end{pgfonlayer}
\end{tikzpicture}}
\newcommand{\TwoZeroFour}{
\begin{tikzpicture}[scale = 0.75]
	\begin{pgfonlayer}{nodelayer}
		\node [style=root] (2) at (330:1) {};
		\node [style=root] (1) at (90:1) {};
		\node [style=root] (0) at (210:1) {};
		\node [style=vertex] (3) at (0,0) {};
	\end{pgfonlayer}
	\begin{pgfonlayer}{edgelayer}
		\draw (0.center) to (1.center);
		\draw [style=dashededge](2.center) to (1.center);
		\draw (0.center) to (2.center);
		\draw (3.center) to (0.center);
		\draw (3.center) to (1.center);
		\draw [style=dashededge](3.center) to (2.center);
	\end{pgfonlayer}
\end{tikzpicture}}
\newcommand{\TwoZeroFive}{
\begin{tikzpicture}[scale = 0.75]
	\begin{pgfonlayer}{nodelayer}
		\node [style=root] (2) at (330:1) {};
		\node [style=root] (1) at (90:1) {};
		\node [style=root] (0) at (210:1) {};
		\node [style=vertex] (3) at (0,0) {};
	\end{pgfonlayer}
	\begin{pgfonlayer}{edgelayer}
		\draw (0.center) to (1.center);
		\draw [style=dashededge](2.center) to (1.center);
		\draw (0.center) to (2.center);
		\draw (3.center) to (0.center);
		\draw [style=dashededge](3.center) to (1.center);
		\draw (3.center) to (2.center);
	\end{pgfonlayer}
\end{tikzpicture}}
\newcommand{\TwoZeroSix}{
\begin{tikzpicture}[scale = 0.75]
	\begin{pgfonlayer}{nodelayer}
		\node [style=root] (2) at (330:1) {};
		\node [style=root] (1) at (90:1) {};
		\node [style=root] (0) at (210:1) {};
		\node [style=vertex] (3) at (0,0) {};
	\end{pgfonlayer}
	\begin{pgfonlayer}{edgelayer}
		\draw (0.center) to (1.center);
		\draw [style=dashededge](2.center) to (1.center);
		\draw (0.center) to (2.center);
		\draw [style=dashededge](3.center) to (0.center);
		\draw (3.center) to (1.center);
		\draw (3.center) to (2.center);
	\end{pgfonlayer}
\end{tikzpicture}}
\newcommand{\TwoZeroSeven}{
\begin{tikzpicture}[scale = 0.75]
	\begin{pgfonlayer}{nodelayer}
		\node [style=root] (2) at (330:1) {};
		\node [style=root] (1) at (90:1) {};
		\node [style=root] (0) at (210:1) {};
		\node [style=vertex] (3) at (0,0) {};
	\end{pgfonlayer}
	\begin{pgfonlayer}{edgelayer}
		\draw (0.center) to (1.center);
		\draw [style=dashededge](2.center) to (1.center);
		\draw (0.center) to (2.center);
		\draw (3.center) to (0.center);
		\draw (3.center) to (1.center);
		\draw (3.center) to (2.center);
	\end{pgfonlayer}
\end{tikzpicture}}
\newcommand{\ThreeZeroZero}{
\begin{tikzpicture}[scale = 0.75]
	\begin{pgfonlayer}{nodelayer}
		\node [style=root] (2) at (330:1) {};
		\node [style=root] (1) at (90:1) {};
		\node [style=root] (0) at (210:1) {};
		\node [style=vertex] (3) at (0,0) {};
	\end{pgfonlayer}
	\begin{pgfonlayer}{edgelayer}
		\draw (0.center) to (1.center);
		\draw (2.center) to (1.center);
		\draw (0.center) to (2.center);
		\draw [style=dashededge](3.center) to (0.center);
		\draw [style=dashededge](3.center) to (1.center);
		\draw [style=dashededge](3.center) to (2.center);
	\end{pgfonlayer}
\end{tikzpicture}}
\newcommand{\ThreeZeroOne}{
\begin{tikzpicture}[scale = 0.75]
	\begin{pgfonlayer}{nodelayer}
		\node [style=root] (2) at (330:1) {};
		\node [style=root] (1) at (90:1) {};
		\node [style=root] (0) at (210:1) {};
		\node [style=vertex] (3) at (0,0) {};
	\end{pgfonlayer}
	\begin{pgfonlayer}{edgelayer}
		\draw (0.center) to (1.center);
		\draw (2.center) to (1.center);
		\draw (0.center) to (2.center);
		\draw (3.center) to (0.center);
		\draw [style=dashededge](3.center) to (1.center);
		\draw [style=dashededge](3.center) to (2.center);
	\end{pgfonlayer}
\end{tikzpicture}}
\newcommand{\ThreeZeroTwo}{
\begin{tikzpicture}[scale = 0.75]
	\begin{pgfonlayer}{nodelayer}
		\node [style=root] (2) at (330:1) {};
		\node [style=root] (1) at (90:1) {};
		\node [style=root] (0) at (210:1) {};
		\node [style=vertex] (3) at (0,0) {};
	\end{pgfonlayer}
	\begin{pgfonlayer}{edgelayer}
		\draw (0.center) to (1.center);
		\draw (2.center) to (1.center);
		\draw (0.center) to (2.center);
		\draw [style=dashededge](3.center) to (0.center);
		\draw (3.center) to (1.center);
		\draw [style=dashededge](3.center) to (2.center);
	\end{pgfonlayer}
\end{tikzpicture}}
\newcommand{\ThreeZeroThree}{
\begin{tikzpicture}[scale = 0.75]
	\begin{pgfonlayer}{nodelayer}
		\node [style=root] (2) at (330:1) {};
		\node [style=root] (1) at (90:1) {};
		\node [style=root] (0) at (210:1) {};
		\node [style=vertex] (3) at (0,0) {};
	\end{pgfonlayer}
	\begin{pgfonlayer}{edgelayer}
		\draw (0.center) to (1.center);
		\draw (2.center) to (1.center);
		\draw (0.center) to (2.center);
		\draw [style=dashededge](3.center) to (0.center);
		\draw [style=dashededge](3.center) to (1.center);
		\draw (3.center) to (2.center);
	\end{pgfonlayer}
\end{tikzpicture}}
\newcommand{\ThreeZeroFour}{
\begin{tikzpicture}[scale = 0.75]
	\begin{pgfonlayer}{nodelayer}
		\node [style=root] (2) at (330:1) {};
		\node [style=root] (1) at (90:1) {};
		\node [style=root] (0) at (210:1) {};
		\node [style=vertex] (3) at (0,0) {};
	\end{pgfonlayer}
	\begin{pgfonlayer}{edgelayer}
		\draw (0.center) to (1.center);
		\draw (2.center) to (1.center);
		\draw (0.center) to (2.center);
		\draw (3.center) to (0.center);
		\draw (3.center) to (1.center);
		\draw [style=dashededge](3.center) to (2.center);
	\end{pgfonlayer}
\end{tikzpicture}}
\newcommand{\ThreeZeroFive}{
\begin{tikzpicture}[scale = 0.75]
	\begin{pgfonlayer}{nodelayer}
		\node [style=root] (2) at (330:1) {};
		\node [style=root] (1) at (90:1) {};
		\node [style=root] (0) at (210:1) {};
		\node [style=vertex] (3) at (0,0) {};
	\end{pgfonlayer}
	\begin{pgfonlayer}{edgelayer}
		\draw (0.center) to (1.center);
		\draw (2.center) to (1.center);
		\draw (0.center) to (2.center);
		\draw (3.center) to (0.center);
		\draw [style=dashededge](3.center) to (1.center);
		\draw (3.center) to (2.center);
	\end{pgfonlayer}
\end{tikzpicture}}
\newcommand{\ThreeZeroSix}{
\begin{tikzpicture}[scale = 0.75]
	\begin{pgfonlayer}{nodelayer}
		\node [style=root] (2) at (330:1) {};
		\node [style=root] (1) at (90:1) {};
		\node [style=root] (0) at (210:1) {};
		\node [style=vertex] (3) at (0,0) {};
	\end{pgfonlayer}
	\begin{pgfonlayer}{edgelayer}
		\draw (0.center) to (1.center);
		\draw (2.center) to (1.center);
		\draw (0.center) to (2.center);
		\draw [style=dashededge](3.center) to (0.center);
		\draw (3.center) to (1.center);
		\draw (3.center) to (2.center);
	\end{pgfonlayer}
\end{tikzpicture}}
\newcommand{\ThreeZeroSeven}{
\begin{tikzpicture}[scale = 0.75]
	\begin{pgfonlayer}{nodelayer}
		\node [style=root] (2) at (330:1) {};
		\node [style=root] (1) at (90:1) {};
		\node [style=root] (0) at (210:1) {};
		\node [style=vertex] (3) at (0,0) {};
	\end{pgfonlayer}
	\begin{pgfonlayer}{edgelayer}
		\draw (0.center) to (1.center);
		\draw (2.center) to (1.center);
		\draw (0.center) to (2.center);
		\draw (3.center) to (0.center);
		\draw (3.center) to (1.center);
		\draw (3.center) to (2.center);
	\end{pgfonlayer}
\end{tikzpicture}}
\begin{document}

\maketitle

\begin{abstract}
A graph $H$ is said to be \emph{common} if the number of monochromatic labelled copies of $H$ in a red/blue edge colouring of a large complete graph is asymptotically minimized by a random colouring in which each edge is equally likely to be red or blue. We extend this notion to an off-diagonal setting. That is, we define a pair $(H_1,H_2)$ of graphs to be $(p,1-p)$-common if a particular linear combination of the density of $H_1$ in red and $H_2$ in blue is asymptotically minimized by a random colouring in which each edge is coloured red with probability $p$ and blue with probability $1-p$. Our results include off-diagonal extensions of several standard theorems on common graphs and novel results for common pairs of graphs with no natural analogue in the classical setting. 
\end{abstract}

\section{Introduction}

Ramsey's Theorem~\cite{Ramsey29} implies that, for every graph $H$, there exists an integer $N$ such that every colouring of the edges of the complete graph $K_N$ with red and blue contains a monochromatic copy of $H$. The minimum such $N$ is known as the \emph{Ramsey number} of $H$. The problem of estimating Ramsey numbers is widely studied and notoriously difficult; see, e.g., the survey of Conlon, Fox and Sudakov~\cite{ConlonFoxSudakov15}. 

The closely related \emph{Ramsey multiplicity problem} asks for the asymptotics of the minimum possible number of monochromatic copies of $H$ in a $2$-edge colouring of $K_N$ as $N$ tends to infinity~\cite{Fox08,Conlon12,FoxWidgerson22}. While Ramsey multiplicity problems typically focus on counting monochromatic copies of a single graph, we consider the problem of minimizing the number of red copies of a graph $H_1$ plus blue copies of another graph $H_2$, each of which is appropriately normalized and weighted. Asymmetric Ramsey multiplicity problems of a similar flavour for cliques were recently investigated by Parczyk, Pokutta, Spiegel, and Szab\'o~\cite{Parczyk+22+}. Our focus in this paper is on classifying pairs of graphs for which the minimum is attained by a random colouring in which each colour appears with a prescribed density. Such problems have a rich history starting with the beautiful result of Goodman~\cite[Theorem~1]{Goodman59} which says that the number of monochromatic triangles in a 2-edge colouring of a large complete graph is asymptotically minimized by an unbiased random colouring. 

In order to state our results, we require some notation and terminology. A \emph{kernel} is a bounded measurable function $U:[0,1]^2\to \mathbb{R}$ which is \emph{symmetric}, meaning that $U(x,y)=U(y,x)$ for all $x,y\in [0,1]$. The \emph{homomorphism density} of a graph $H$ in a kernel $U$ is 
\[t(H,U) := \int\displaylimits_{[0,1]^{V(H)}}\prod_{uv\in E(H)}U(x_u,x_v)dx_{V(H)}\]
where $x_{V(H)}$ is a vector $(x_u: u\in V(H))$ of variables indexed by $V(H)$. A \emph{graphon} is a kernel $W$ such that $0\leq W(x,y)\leq 1$ for all $x,y\in[0,1]$. Informally, a graphon can be thought of as a complete graph with vertex set $[0,1]$ such that, for $x,y\in [0,1]$, the edge $xy$ is assigned a weight of $W(x,y)$. A graph $H$ is said to be \emph{common} if
\[t(H,W) + t(H,1-W)\geq (1/2)^{e(H)-1}\]
for every graphon $W$, where $e(H):=|E(H)|$. In other words, $H$ is common if $t(H,W)+t(H,1-W)$ is minimized when $W$ is the constant graphon $W=1/2$; this graphon represents the ``limit'' of a sequence of Erd\H{o}s--R\'enyi random graphs $G(n,1/2)$ as $n\to\infty$. The aforementioned result of Goodman~\cite{Goodman59} on monochromatic triangles implies that $K_3$ is common. 

The idea to investigate off-diagonal variants of the notion of common graphs first came to our attention when it was proposed by Hladk\'y after a webinar talk given by the third author~\cite{Noel20}. Specifically, Hladk\'y's question focused on pairs of graphs with the same number of edges and random colourings with the same density of each colour. In this paper, we consider the more general situation in which the two graphs can have different numbers of edges and the optimal colouring is random with different proportions of red and blue edges. 

\begin{defn}\label{defn:commonPair}
For $p_1,p_2\in (0,1)$ such that $p_1+p_2=1$, a pair $(H_1,H_2)$ of graphs is said to be \emph{$(p_1,p_2)$-common} if
\[\frac{t(H_1,W_1)}{e(H_1)p_1^{e(H_1)-1}} + \frac{t(H_2,W_2)}{e(H_2)p_2^{e(H_2)-1}}\geq \frac{p_1}{e(H_1)} + \frac{p_2}{e(H_2)}\]
for any graphons $W_1$ and $W_2$ such that $W_1+W_2=1$.
\end{defn} 

By plugging $p_1=p_2=1/2$ and $H_1=H_2=H$ into Definition~\ref{defn:commonPair}, one gets that a graph $H$ is common if and only if the pair $(H,H)$ is $(1/2,1/2)$-common; thus Definition~\ref{defn:commonPair} generalizes the notion of common graphs.


Our focus in this paper is on extending classical results for common graphs to this off-diagonal setting and obtaining interesting examples of $(p,1-p)$-common pairs of graphs. These results will shed light on the set
\[\pi(H_1,H_2):=\{p\in (0,1): (H_1,H_2)\text{ is }(p,1-p)\text{-common}\}\]
for two graphs $H_1$ and $H_2$. In particular, the results below imply that $\pi(H_1,H_2)$ can be equal to the full interval $(0,1)$ (by Theorem~\ref{th:Sid}), a singleton (by Corollary~\ref{cor:Girth}), $\emptyset$ (by Theorem~\ref{th:KFour}) or a proper subset of $(0,1)$ containing at least two elements (by Theorems~\ref{th:Flag} and~\ref{th:CFourCFiveNot}). We conjecture that $\pi(H_1,H_2)$ is always an interval.

\newConj{Interval}{For any graphs $H_1$ and $H_2$, the set $\pi(H_1,H_2)$ is an interval.}

The problem of classifying common graphs is intimately linked to Sidorenko's Conjecture~\cite{Sidorenko93}, which says that, if $H$ is bipartite, then
\begin{equation}\label{eq:Sid}t(H,W)\geq t(K_2,W)^{e(H)}\end{equation}
for every graphon $W$. If $H$ satisfies \eqref{eq:Sid} for every $W$, then we say that $H$ is \emph{Sidorenko}. A simple convexity argument shows that, if $H$ is Sidorenko, then $H$ is common. Therefore, all of the positive results on Sidorenko's Conjecture automatically yield examples of common graphs. Despite a great deal of recent progress~\cite{ConlonKimLeeLee18,ConlonLee21,KimLeeLee16,ConlonLee17,ConlonFoxSudakov10,Szegedy15,Hatami10}, Sidorenko's Conjecture remains open. Interestingly, there is currently no example of a bipartite graph which is known to be common and not known to be Sidorenko. Our first result says that pairs of Sidorenko graphs are $(p,1-p)$-common for all $p\in(0,1)$. We also propose converses to two consequences of this result.

\newThm{Sid}{If $H_1$ and $H_2$ are Sidorenko, then $(H_1,H_2)$ is $(p_1,p_2)$-common for any positive reals $p_1$ and $p_2$ such that $p_1+p_2=1$.}

\newQues{Sid}{Suppose that $H$ is a graph such that $(H,H)$ is $(p,1-p)$-common for all $p\in (0,1)$. Does it follow that $H$ is Sidorenko? What if $(H,H)$ is $(p,1-p)$-common for \emph{some} $p\in (0,1)\setminus\{1/2\}$?}

Regarding non-bipartite graphs, our next result provides an interesting necessary condition on odd cycles in $(p_1,p_2)$-common pairs of graphs. Given a graph $H$ and integer $k\geq3$, we let $c_k(H)$ be the number of unlabelled cycles of length $k$ in $H$. The \emph{girth} of $H$ is $g(H):=\min\{k: c_k(H)\geq1\}$. In particular,  $g(H)=\infty$ if $H$ has no cycles.  

\newThm{Girth}{Let $H_1$ and $H_2$ be graphs and let $p_1$ and $p_2$ be positive reals such that $p_1 + p_2 = 1$. Suppose that $g(H_2)$ is odd. If $(H_1,H_2)$ is $(p_1,p_2)$-common, then either $g(H_1)<g(H_2)$ or $g(H_1)=g(H_2)=k$ and
\[\frac{c_k(H_1)}{e(H_1)p_1^{k-1}} = \frac{c_k(H_2)}{e(H_2)p_2^{k-1}}.\]}

Sidorenko~\cite[Corollary~1]{Sidorenko89} proved that odd cycles are common; thus, Theorem~\ref{th:Girth} yields the following corollary.

\begin{cor}
\label{cor:Girth}
If $k\geq3$ is odd, then $(C_k,C_k)$ is $(p,1-p)$-common if and only if $p=1/2$. 
\end{cor}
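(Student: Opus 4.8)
The plan is to extract the two directions from the two facts already in hand: Theorem~\ref{th:Girth} drives the forward implication, while the reverse implication is just the observation recorded after Definition~\ref{defn:commonPair} — that $(H,H)$ is $(1/2,1/2)$-common exactly when $H$ is common — combined with Sidorenko's theorem that odd cycles are common.

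For the reverse direction, suppose $p=1/2$. Since $C_k$ is common for every odd $k\geq 3$, and since setting $p_1=p_2=1/2$ and $H_1=H_2=C_k$ in Definition~\ref{defn:commonPair} shows that $(C_k,C_k)$ is $(1/2,1/2)$-common if and only if $C_k$ is common, we conclude that $(C_k,C_k)$ is $(1/2,1/2)$-common, as required.

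For the forward direction, suppose $(C_k,C_k)$ is $(p,1-p)$-common with $k$ odd. First I would record the relevant parameters of $C_k$: it has $e(C_k)=k$ edges, girth $g(C_k)=k$, and contains exactly one unlabelled $k$-cycle, so $c_k(C_k)=1$. Now apply Theorem~\ref{th:Girth} with $H_1=H_2=C_k$ and $(p_1,p_2)=(p,1-p)$; the hypothesis that $g(H_2)=k$ is odd holds. The conclusion splits into two cases. The case $g(H_1)<g(H_2)$ cannot occur, since $g(H_1)=g(H_2)=k$. Hence the second case holds, giving
\[\frac{c_k(C_k)}{e(C_k)\,p^{k-1}}=\frac{c_k(C_k)}{e(C_k)\,(1-p)^{k-1}},\]
which, after substituting $c_k(C_k)=1$ and cancelling $e(C_k)=k$, reduces to $p^{k-1}=(1-p)^{k-1}$. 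Since $p$ and $1-p$ are positive and $x\mapsto x^{k-1}$ is strictly increasing on $(0,\infty)$, this forces $p=1-p$, i.e. $p=1/2$.

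Combining the two directions yields the claimed equivalence. I do not foresee any genuine obstacle: the whole argument is a short deduction from Theorem~\ref{th:Girth} and the classical fact that odd cycles are common, and the only step needing a moment of care is verifying that $c_k(C_k)=1$ and $e(C_k)=k$, so that the identity supplied by Theorem~\ref{th:Girth} collapses to $p^{k-1}=(1-p)^{k-1}$.
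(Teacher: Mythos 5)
Your proof is correct and follows exactly the route the paper intends: the ``if'' direction comes from Sidorenko's result that odd cycles are common together with the equivalence between $H$ being common and $(H,H)$ being $(1/2,1/2)$-common, and the ``only if'' direction is Theorem~\ref{th:Girth} applied with $H_1=H_2=C_k$, where $c_k(C_k)=1$ and $e(C_k)=k$ collapse the density identity to $p^{k-1}=(1-p)^{k-1}$ and hence $p=1/2$. No gaps.
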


By Theorem~\ref{th:Girth}, if $H_1$ and $H_2$ have odd girth, then there is at most one $p\in (0,1)$ such that $(H_1,H_2)$ is $(p,1-p)$-common. The next theorem provides an example of a pair $(H_1,H_2)$ of graphs, one of which has odd girth, such that $(H_1,H_2)$ is $(p,1-p)$-common for more than one $p\in (0,1)$.

\newThm{Flag}{$(C_4,C_5)$ is $(1/2,1/2)$-common and $(1/3,2/3)$-common.}

\newThm{CFourCFiveNot}{If $p > 0.518$, then $(C_4,C_5)$ is not $(p,1-p)$-common.}

The previous two theorems suggest the following open problem. 

\newProb{CFourCFive}{Determine $\pi(C_4,C_5)$.}

In 1962, Erd\H{o}s~\cite[Equation~(3)]{Erdos62} conjectured that all complete graphs are common and, in 1980, Burr and Rosta~\cite{BurrRosta80} extended Erd\H{o}s' conjecture to the statement that every graph $H$ is common. Both of these conjectures turn out to be false; Sidorenko~\cite[p.~881]{Sidorenko89} showed that a triangle with a pendant edge is uncommon and Thomason~\cite{Thomason89} proved that $K_4$ is uncommon. The latter result was extended by Jagger, \v{S}\v{t}ov\'{\i}\v{c}ek and Thomason~\cite[Theorem~12]{JaggerStovicekThomason96} to the very general statement that any graph containing $K_4$ is uncommon. We generalize this to the off-diagonal setting in a strong sense. 

\newThm{KFour}{Let $H_1$ and $H_2$ be graphs and let $p_1,p_2\in (0,1)$ so that $p_1+p_2=1$. If $(H_1,H_2)$ is $(p_1,p_2)$-common, then both of $H_1$ and $H_2$ are $K_4$-free.}

We conjecture that the existence of a $K_4$ subgraph is the only barrier to a graph being contained in a $(p,1-p)$-common pair for some $p\in(0,1)$. We note that it would also be interesting to prove more restricted versions of the following conjecture; in particular, proving it for bipartite graphs could be viewed as a natural weak form of Sidorenko's Conjecture. 

\newConj{KFourBarrier}{If $H_1$ is $K_4$-free, then there exists a graph $H_2$ and $p\in (0,1)$ such that $(H_1,H_2)$ is $(p,1-p)$-common.}

In light of the fact that common graphs cannot contain a $K_4$, a central problem in the area has been to determine whether common graphs can have high chromatic number. The first example of a non-3-colourable common graph, the $5$-wheel, was found in~\cite{Hatami+12}. In a recent breakthrough paper, Kr\'a\v{l}, Volec and Wei~\cite{KralVolecWei22+} proved that there are common graphs of arbitrary chromatic number; see also~\cite{KoLee22}.

In Section~\ref{sec:Sid}, we prove Theorem~\ref{th:Sid} and a result regarding disjoint unions of $(p_1,p_2)$-common pairs of graphs. Then, in Section~\ref{sec:odd}, we recall basic facts about kernels, generalize some standard lemmas from the study of common graphs and use these statements to prove Theorem~\ref{th:Girth}. We adapt a construction of Jagger, \v{S}\v{t}ov\'i\v{c}ek and Thomason~\cite{JaggerStovicekThomason96} in Section~\ref{sec:K4} to prove Theorem~\ref{th:KFour}. In Section~\ref{sec:flags}, we apply the powerful method of flag algebras to establish Theorem~\ref{th:Flag} and provide a simple construction of a graphon which proves Theorem~\ref{th:CFourCFiveNot}. Some of the tedious calculations needed to verify the proof of Theorem~\ref{th:Flag} have been relegated to appendices which have been included in an ancilliary file with the arxiv submission of the paper: \url{https://arxiv.org/src/2208.02045/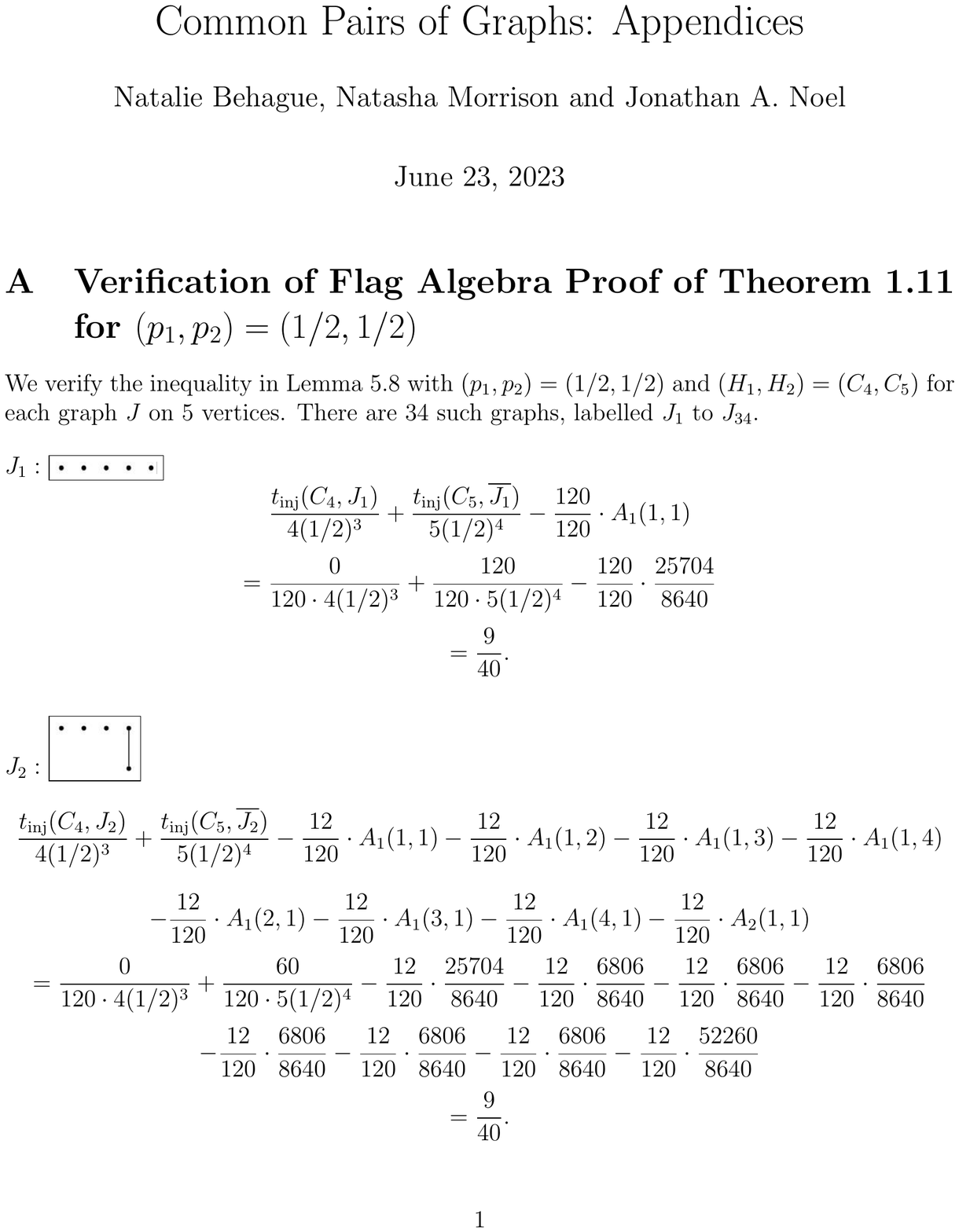}. We conclude the paper in Sections~\ref{sec:multicol} and~\ref{sec:concl} by discussing a multicolour generalization and proposing several open problems. Further examples of common pairs of graphs constructed from odd cycles via certain ``gluing operations,'' proven using ideas from information theory, can be found in our companion paper~\cite{SecondPaper}. 

\section{Sidorenko Graphs and Disjoint Unions}
\label{sec:Sid}

We start by proving Theorem~\ref{th:Sid}, which we restate below for convenience. The proof involves an application of the following standard inequality. 

\begin{lem}[Bernoulli's Inequality, see~{\cite[Section~2.4]{Mitrinovic70}}]
\label{lem:Bernoulli}
If $1+x\geq 0$ and $r\geq 1$, then
\[(1+x)^r\geq 1+rx.\]
\end{lem}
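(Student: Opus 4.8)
The statement is the classical Bernoulli inequality, which the authors cite from~\cite{Mitrinovic70}; nonetheless, here is how I would prove it from scratch. The plan is to reduce to a one-variable calculus argument. Fix $r\ge 1$ and define $f(x):=(1+x)^r-1-rx$ on the domain $x\ge -1$; note that the hypothesis $1+x\ge 0$ is exactly what is needed for $(1+x)^r$ to be well defined when $r$ is not an integer. Since $f(0)=0$, it suffices to show that $x=0$ is a global minimum of $f$ on this domain.

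To that end, observe that $f$ is continuous on $[-1,\infty)$ and differentiable on $(-1,\infty)$ with $f'(x)=r\bigl((1+x)^{r-1}-1\bigr)$. Because $r-1\ge 0$, the map $t\mapsto t^{r-1}$ is nondecreasing on $[0,\infty)$ and equals $1$ at $t=1$; hence $(1+x)^{r-1}\le 1$ for $-1<x\le 0$ and $(1+x)^{r-1}\ge 1$ for $x\ge 0$. Consequently $f'(x)\le 0$ on $(-1,0]$ and $f'(x)\ge 0$ on $[0,\infty)$, so $f$ is nonincreasing on $[-1,0]$ and nondecreasing on $[0,\infty)$. Therefore $f(x)\ge f(0)=0$ for every $x\ge -1$, which is the desired inequality. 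The left endpoint $x=-1$ is covered too, since $f(-1)=r-1\ge 0$, consistent with $f$ being nonincreasing on $[-1,0]$.

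An alternative, essentially equivalent, route avoids derivatives: for $r\ge 1$ the function $\phi(t):=t^r$ is convex on $[0,\infty)$, hence lies above each of its supporting lines; taking the supporting line at $t=1$, namely $t\mapsto 1+r(t-1)$, gives $t^r\ge 1+r(t-1)$ for all $t\ge 0$, and substituting $t=1+x$ yields the claim. I would present the monotonicity argument as the main proof, since it is self-contained modulo standard properties of real powers, and mention the convexity argument as a remark.

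There is no real obstacle here; the only delicate points are bookkeeping. One must ensure $(1+x)^{r-1}$ is meaningful, which is guaranteed precisely by $1+x\ge 0$; one should dispatch the degenerate case $r=1$ separately, where $f\equiv 0$ and equality holds; and one should handle differentiability at the left endpoint $x=-1$ with care, which is why the argument is phrased via monotonicity of $f$ on the closed interval $[-1,0]$ together with continuity, rather than via $f'(-1)$ directly.
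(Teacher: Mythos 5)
Your proof is correct. The paper does not prove this lemma at all --- it is quoted as a standard result with a citation to Mitrinovi\'{c}'s \emph{Analytic inequalities} --- so there is no in-paper argument to compare against; your monotonicity argument (and the convexity remark) is the standard derivation and fills this in correctly, including the minor points of the endpoint $x=-1$, the case $r=1$, and the role of the hypothesis $1+x\geq 0$ in making $(1+x)^{r}$ and $(1+x)^{r-1}$ well defined for non-integer $r$.
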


\repeatThm{Sid}

\begin{proof}
Let $p_1,p_2\in (0,1)$ such that $p_1+p_2=1$ and let $W_1$ and $W_2$ be graphons such that $W_1+W_2=1$. Define $x:=t(K_2,W_1)-p_1$. By definition of $x$, we have $t(K_2,W_1)=p_1+x$ and $t(K_2,W_2)=p_2-x$. Since $t(K_2,W_1),t(K_2,W_2)\geq0$, we have $-p_1\leq x\leq p_2$. Since $H_1$ and $H_2$ are Sidorenko,
\[\frac{t(H_1,W_1)}{e(H_1)p_1^{e(H_1)-1}} + \frac{t(H_2,W_2)}{e(H_2)p_2^{e(H_2)-1}} \geq \frac{(p_1+x)^{e(H_1)}}{e(H_1)p_1^{e(H_1)-1}} + \frac{(p_2-x)^{e(H_2)}}{e(H_2)p_2^{e(H_2)-1}}\]
\[=\frac{p_1(1+x/p_1)^{e(H_1)}}{e(H_1)} + \frac{p_2(1-x/p_2)^{e(H_2)}}{e(H_2)}.\]
By applying Bernoulli's Inequality (Theorem~\ref{lem:Bernoulli}) to both terms, we get that this is at least $\frac{p_1}{e(H_1)}+\frac{p_2}{e(H_2)}$, which completes the proof. 
\end{proof}

Given two graphs $F_1$ and $F_2$, let $F_1\sqcup F_2$ denote the disjoint union of $F_1$ and $F_2$. Next, we investigate the effect of taking disjoint unions on the property of being $(p_1,p_2)$-common.

\begin{thm}
\label{th:unions}
Let $t\geq1$, let $F_1,\dots,F_s$ and $H$ be non-empty graphs, let $F:=\bigsqcup_{i=1}^s F_i$ and let $p_1$ and $p_2$ be positive reals such that $p_1+p_2=1$. If $e(F_1)=\cdots =e(F_s)$ and all of the pairs $(F_i,H)$ for $1\leq i\leq s$ are $(p_1,p_2)$-common, then $(F,H)$ is $(p_1,p_2)$-common. 
\end{thm}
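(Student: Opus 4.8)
The plan is to reduce the claim about $F=\bigsqcup_{i=1}^s F_i$ to the hypotheses about the individual pairs $(F_i,H)$ by exploiting the multiplicativity of homomorphism density over disjoint unions, namely $t(F,W)=\prod_{i=1}^s t(F_i,W)$, together with the common assumption $e(F_1)=\cdots=e(F_s)=:m$, which forces $e(F)=sm$. Fix graphons $W_1,W_2$ with $W_1+W_2=1$, and write $a_i:=t(F_i,W_1)$ and $b:=t(H,W_2)$. The $(p_1,p_2)$-commonness of $(F_i,H)$ says precisely that
\[
\frac{a_i}{m\,p_1^{\,m-1}} + \frac{b}{e(H)\,p_2^{\,e(H)-1}} \;\geq\; \frac{p_1}{m} + \frac{p_2}{e(H)},
\]
and I want to deduce the corresponding inequality for $F$, where $t(F,W_1)=\prod_i a_i$ and $e(F)=sm$:
\[
\frac{\prod_i a_i}{sm\,p_1^{\,sm-1}} + \frac{b}{e(H)\,p_2^{\,e(H)-1}} \;\geq\; \frac{p_1}{sm} + \frac{p_2}{e(H)}.
\]

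First I would isolate, from each of the $s$ input inequalities, a lower bound on $a_i$ purely in terms of the ``$H$-defect'' of the colouring: setting $\delta:=\dfrac{b}{e(H)p_2^{e(H)-1}}-\dfrac{p_2}{e(H)}$ (the amount by which the blue-$H$ term exceeds its Goodman value), each input reads $a_i\geq m p_1^{m-1}\!\left(\tfrac{p_1}{m}-\delta\right)=p_1^{m}-m p_1^{m-1}\delta$. If $\delta\geq p_1/m$ then the blue term alone already exceeds $\tfrac{p_1}{sm}+\tfrac{p_2}{e(H)}$ for $F$ as well (since $sm\geq m$, so $\tfrac{p_1}{sm}\le\tfrac{p_1}{m}$), and we are done; so assume $\delta<p_1/m$, which makes all the lower bounds $p_1^m - m p_1^{m-1}\delta$ strictly positive. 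Now I multiply these $s$ bounds together to estimate $\prod_i a_i\geq (p_1^m-mp_1^{m-1}\delta)^s=p_1^{sm}(1-m\delta/p_1)^s$, and substitute into the target inequality. After dividing through by $sm p_1^{sm-1}$, the red term for $F$ becomes at least $\tfrac{p_1}{sm}(1-m\delta/p_1)^s$, and the blue term is $\tfrac{p_2}{e(H)}+\delta$ by definition of $\delta$; so it suffices to show
\[
\frac{p_1}{sm}\left(1-\frac{m\delta}{p_1}\right)^{\!s} + \delta \;\geq\; \frac{p_1}{sm},
\]
i.e. $\left(1-\tfrac{m\delta}{p_1}\right)^{s}\geq 1-\tfrac{sm\delta}{p_1}$. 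Since $1-\tfrac{m\delta}{p_1}\geq 0$ (because $\delta<p_1/m$) and $s\geq 1$, this is exactly Bernoulli's Inequality (Lemma~\ref{lem:Bernoulli}) with $x=-m\delta/p_1$ and $r=s$. This closes the argument.

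The main subtlety — and the step I would be most careful about — is the legitimacy of multiplying the $s$ individual lower bounds on the $a_i$: this requires that all the quantities being multiplied are nonnegative, which is why the case split on the sign of $p_1/m-\delta$ is essential. (If some $a_i$-bound were negative one could not conclude anything useful from the product, and indeed $\prod a_i$ could still be large.) A secondary point is that the reduction genuinely uses $e(F_1)=\cdots=e(F_s)$: without it the exponents $p_1^{e(F_i)-1}$ in the denominators differ and the bounds on the $a_i$ no longer combine cleanly into a single power of $(1-m\delta/p_1)$. Everything else — the multiplicativity $t(F,W)=\prod t(F_i,W)$ and $e(F)=\sum e(F_i)$ — is routine. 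I would also remark that the statement as phrased introduces an unused parameter $t\geq 1$; the proof I have in mind needs only $s\geq 1$ and the stated hypotheses.
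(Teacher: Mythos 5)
Your proof is correct and follows essentially the same route as the paper's: lower-bound each $t(F_i,W_1)$ using the individual commonness inequalities, exploit $t(F,W_1)=\prod_{i}t(F_i,W_1)$ together with $e(F_1)=\cdots=e(F_s)$, and finish with Bernoulli's inequality. The only (cosmetic) difference is that the paper parametrizes by the minimum of the normalized red densities, which is automatically nonnegative and so avoids any case split, whereas your parametrization by the blue defect $\delta$ forces the easy case distinction $\delta\geq p_1/m$ versus $\delta<p_1/m$; both work, and you are right that the unused $t\geq 1$ in the statement is just a typo for $s\geq 1$.
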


\begin{proof}
Define $k=e(F_1)$ and let $W_1$ and $W_2$ be graphons such that $W_1+W_2=1$. Define
\[x:=-\frac{p_1}{k} +\min_{1\leq i\leq t}\left\{\frac{t(F_i,W_1)}{kp_1^{k-1}}\right\}.\]
For any index $1\leq i\leq s$, we have
\begin{equation}\label{eq:Fi}t(F_i,W_1) \geq \min_{1\leq i\leq s}\left\{t(F_i,W_1)\right\} = kp_1^{k-1}\left(\frac{p_1}{k}+x\right)\geq0.\end{equation}
Since $(F_i,H)$ is $(p_1,p_2)$-common for all $1\leq i\leq s$ and all of the graphs $F_1,\dots,F_s$ have $k$ edges, we have
\begin{equation}\label{eq:F3}\frac{t(H,W_2)}{e(H)p_2^{e(H)-1}}\geq\frac{p_1}{k} +  \frac{p_2}{e(H)}-\min_{1\leq i\leq s}\left\{\frac{t(F_i,W_1)}{kp_1^{k-1}}\right\} = \frac{p_2}{e(H)}-x.\end{equation}

Note that $t(F,W_1) = \prod_{i=1}^s t(F_i,W_1)$. So, combining \eqref{eq:Fi} and \eqref{eq:F3} yields
\[\frac{t(F,W_1)}{skp_1^{sk-1}} + \frac{t(H,W_2)}{e(H)p_2^{e(H)-1}}\geq\frac{k^sp_1^{s(k-1)}(p_1/k+x)^s}{skp_1^{sk-1}} + \frac{p_2}{e(H)}-x=\frac{p_1(1+xk/p_1)^s}{sk} + \frac{p_2}{e(H)}-x.\]
By \eqref{eq:Fi}, we can apply Bernoulli's Inequality (Theorem~\ref{lem:Bernoulli}) to get
\[\frac{p_1(1+xk/p_1)^s}{sk} + \frac{p_2}{e(H)}-x\geq \frac{p_1(1+xsk/p_1)}{sk} + \frac{p_2}{e(H)}-x\]
\[=\frac{p_1}{sk} + \frac{p_2}{e(H)}=\frac{p_1}{e(F)}+\frac{p_2}{e(H)}\]
which completes the proof.
\end{proof}

Given a graph $F$ and non-negative integer $s$, let $s\cdot F$ be the disjoint union of $s$ copies of $F$. The following corollary is easily derived from Theorem~\ref{th:unions}. 

\begin{cor}
\label{cor:unions}
If $(H_1,H_2)$ is $(p_1,p_2)$-common, then $(s_1\cdot H_1,s_2\cdot H_2)$ is $(p_1,p_2)$-common for any positive integers $s_1$ and $s_2$. 
\end{cor}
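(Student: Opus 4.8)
The plan is to derive Corollary~\ref{cor:unions} from Theorem~\ref{th:unions} by applying the latter twice, once to each coordinate of the pair, with a minor symmetry observation in between. First I would invoke Theorem~\ref{th:unions} directly: take $s = s_1$, let $F_1 = \cdots = F_{s_1} = H_1$ (so the hypothesis $e(F_1) = \cdots = e(F_{s_1})$ holds trivially), and take the second graph of each pair to be $H_2$. Since $(H_1, H_2)$ is $(p_1, p_2)$-common by assumption, all the pairs $(F_i, H_2)$ are $(p_1, p_2)$-common, so Theorem~\ref{th:unions} yields that $(\bigsqcup_{i=1}^{s_1} H_1,\, H_2) = (s_1 \cdot H_1,\, H_2)$ is $(p_1, p_2)$-common.

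Next I would handle the second coordinate. The only subtlety is that Theorem~\ref{th:unions} is stated with the disjoint union happening in the \emph{first} slot, whereas now I need to replace $H_2$ by $s_2 \cdot H_2$. So the first step is to record the (trivial) observation that $(A, B)$ is $(p_1, p_2)$-common if and only if $(B, A)$ is $(p_2, p_1)$-common; this is immediate from Definition~\ref{defn:commonPair}, since the defining inequality is symmetric under simultaneously swapping $(H_1, W_1, p_1) \leftrightarrow (H_2, W_2, p_2)$ (and the constraint $W_1 + W_2 = 1$ is symmetric). Applying this to the conclusion of the previous paragraph, $(H_2,\, s_1 \cdot H_1)$ is $(p_2, p_1)$-common. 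Now apply Theorem~\ref{th:unions} again with $s = s_2$, all $F_i$ equal to $H_2$, second graph equal to $s_1 \cdot H_1$, and the roles of $p_1, p_2$ exchanged: every pair $(H_2,\, s_1 \cdot H_1)$ is $(p_2, p_1)$-common, hence $(s_2 \cdot H_2,\, s_1 \cdot H_1)$ is $(p_2, p_1)$-common. One final application of the swap observation gives that $(s_1 \cdot H_1,\, s_2 \cdot H_2)$ is $(p_1, p_2)$-common, as desired. I should also note the edge cases: $H_1, H_2$ non-empty (required since Definition~\ref{defn:commonPair} and Theorem~\ref{th:unions} need $e(H_i) \geq 1$), and $s_1, s_2 \geq 1$ so the disjoint unions are themselves non-empty; these match the hypotheses of Theorem~\ref{th:unions}.

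There is essentially no obstacle here — the corollary is a bookkeeping consequence of the theorem. The one thing to be careful about is making the symmetry statement explicit rather than hand-waving it, since Theorem~\ref{th:unions} is genuinely asymmetric in its two arguments (it requires the graphs being unioned to have equal edge counts, which is automatic when they are all copies of a single graph, but it says nothing of the sort about $H$). Writing out the ``swap'' remark cleanly, and being careful that the edge-count hypothesis $e(F_1) = \cdots = e(F_s)$ is satisfied in both applications because we only ever union copies of one fixed graph, is the whole content of the proof.
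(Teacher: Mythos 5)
Your proof is correct and is exactly the derivation the paper has in mind (the paper simply states that the corollary is ``easily derived from Theorem~\ref{th:unions}'' without writing it out): two applications of Theorem~\ref{th:unions}, linked by the observation that $(A,B)$ being $(p_1,p_2)$-common is equivalent to $(B,A)$ being $(p_2,p_1)$-common. Your care in noting that the edge-count hypothesis is automatic for copies of a single graph, and that the swap must be made explicit because the theorem is asymmetric, is exactly the right bookkeeping.
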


\section{Algebraic Expansion and Odd Cycles}
\label{sec:odd}

One approach to analyzing $t(H,W)$ is to re-parameterize $W$ as a constant function plus a ``perturbation'' kernel and expand the product inside of the integral in the definition of $t(H,W)$. This is a standard trick that is used in several papers on common graphs~\cite{CsokaHubaiLovasz23,JaggerStovicekThomason96,GrzesikLeeLidickyVolec22,HancockKralKrncVolec23,Sidorenko96,Thomason97} and Sidorenko's Conjecture~\cite{Lovasz11}. Given a graph $H$ and a set $E \subseteq E(H)$, let $H[E]$ be the graph with vertex set $V(H)$ and edge set $E$.

\begin{lem}
\label{lem:expansion}
Let $H$ be a graph, let $W$ be a kernel, let $p\in\mathbb{R}$ and let $U$ be the kernel defined by $U:=W-p$. Then
\[t(H,W)=\sum_{E\subseteq E(H)}p^{e(H)-|E|}t(H[E],U).\]
\end{lem}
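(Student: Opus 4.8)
The plan is to expand the product $\prod_{uv\in E(H)}W(x_u,x_v)$ inside the integral defining $t(H,W)$ by substituting $W = U + p$ and using distributivity. First I would write, for each edge $uv\in E(H)$,
\[W(x_u,x_v) = U(x_u,x_v) + p,\]
so that
\[\prod_{uv\in E(H)} W(x_u,x_v) = \prod_{uv\in E(H)}\bigl(U(x_u,x_v)+p\bigr).\]
Expanding this product over the $e(H)$ factors, each term in the expansion corresponds to a choice, for every edge $uv$, of either the ``$U(x_u,x_v)$'' summand or the ``$p$'' summand. Encoding such a choice by the set $E\subseteq E(H)$ of edges for which we pick the $U$-summand, the corresponding term is
\[p^{e(H)-|E|}\prod_{uv\in E}U(x_u,x_v).\]
Hence
\[\prod_{uv\in E(H)} W(x_u,x_v) = \sum_{E\subseteq E(H)} p^{e(H)-|E|}\prod_{uv\in E}U(x_u,x_v).\]

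Next I would integrate both sides over $[0,1]^{V(H)}$ with respect to $dx_{V(H)}$. Since the sum on the right is finite, integration commutes with it, giving
\[t(H,W) = \sum_{E\subseteq E(H)} p^{e(H)-|E|}\int\displaylimits_{[0,1]^{V(H)}}\prod_{uv\in E}U(x_u,x_v)\,dx_{V(H)}.\]
To finish, I would observe that the graph $H[E]$ has vertex set $V(H)$ and edge set $E$, so by the definition of homomorphism density the integral on the right is exactly $t(H[E],U)$ — here it is important that $H[E]$ retains all of $V(H)$ as its vertex set (including any vertices isolated in $H[E]$), so that the integral is over $[0,1]^{V(H)}$ rather than over $[0,1]^{V(H[E])}$ with a smaller vertex set; the two agree precisely because $V(H[E])=V(H)$ by the definition of $H[E]$ given just before the lemma. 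Substituting yields the claimed identity
\[t(H,W) = \sum_{E\subseteq E(H)} p^{e(H)-|E|}\,t(H[E],U).\]

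There is essentially no serious obstacle here: the proof is a one-line application of the distributive law followed by linearity of the integral. The only point requiring a moment's care is the bookkeeping that matches monomials in the expansion with subsets $E$ of the edge set and the observation that isolated vertices of $H[E]$ contribute only trivial integration variables (each integrating to $1$), which is automatically handled by the convention that $t(\cdot,\cdot)$ integrates over all vertices of the graph. One could also phrase the argument as an induction on $e(H)$, peeling off one edge at a time, but the direct expansion is cleaner and I would present it that way.
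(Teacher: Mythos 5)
Your proof is correct and follows exactly the same route as the paper: substitute $W=p+U$, expand the product inside the integral via distributivity, and identify each resulting term with $p^{e(H)-|E|}t(H[E],U)$. The extra remark about $H[E]$ retaining all of $V(H)$ (so isolated vertices integrate to $1$) is a fine point the paper leaves implicit, but it changes nothing in the argument.
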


\begin{proof}
We have
\[t(H,W) = \int\displaylimits_{[0,1]^{V(H)}}\prod_{uv\in E(H)}W(x_u,x_v)dx_{V(H)}=\int\displaylimits_{[0,1]^{V(H)}}\prod_{uv\in E(H)}\left(p+U(x_u,x_v)\right)dx_{V(H)}.\]
Expanding the product inside the above integral yields
\[\int\displaylimits_{[0,1]^{V(H)}} \left(\sum_{E\subseteq E(H)}p^{e(H)-|E|}\prod_{uv\in E}U(x_u,x_v)\right)dx_{V(H)}=\sum_{E\subseteq E(H)}p^{e(H)-|E|}t(H[E],U).\]
\end{proof}

Lemma~\ref{lem:expansion} leads us to an equivalent condition for a pair of graphs to be $(p_1,p_2)$-common.

\begin{lem}
\label{lem:expansionLem}
Let $p_1,p_2\in (0,1)$ such that $p_1+p_2=1$ and let $H_1$ and $H_2$ be non-empty graphs. Then $(H_1,H_2)$  is $(p_1,p_2)$-common if and only if 
\begin{equation}\label{eq:expansionBound}\sum_{\substack{E\subseteq E(H_1)\\ |E|\geq2}}\frac{t(H_1[E],U)}{e(H_1)p_1^{|E|-1}} + \sum_{\substack{E\subseteq E(H_2)\\ |E|\geq2}}(-1)^{|E|}\frac{t(H_2[E],U)}{e(H_2)p_2^{|E|-1}}\end{equation}
is non-negative for any kernel $U$ such that $-p_1 \leq U(x,y)\leq p_2$ for all $x,y\in [0,1]$.
\end{lem}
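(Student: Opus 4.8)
The plan is to apply Lemma~\ref{lem:expansion} directly to both $t(H_1,W_1)$ and $t(H_2,W_2)$ with the right choices of base constant, and then simplify. Specifically, I would set $U := W_1 - p_1$. Since $W_1 + W_2 = 1$ and $p_1 + p_2 = 1$, we get $W_2 - p_2 = (1-W_1) - (1-p_1) = -(W_1 - p_1) = -U$; this is the key observation that lets a single kernel $U$ govern both sides. The constraint $0 \le W_1 \le 1$ translates exactly into $-p_1 \le U(x,y) \le p_2$, which is the constraint appearing in the statement.

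Next, I would expand. By Lemma~\ref{lem:expansion} applied to $H_1$ with base $p_1$ and kernel $U$, $t(H_1,W_1) = \sum_{E\subseteq E(H_1)} p_1^{e(H_1)-|E|} t(H_1[E],U)$; similarly $t(H_2,W_2) = \sum_{E\subseteq E(H_2)} p_2^{e(H_2)-|E|} t(H_2[E],-U)$. Since $t(H_2[E],-U) = (-1)^{|E|} t(H_2[E],U)$ (each of the $|E|$ edge-factors picks up a sign), the second sum becomes $\sum_{E} (-1)^{|E|} p_2^{e(H_2)-|E|} t(H_2[E],U)$. Now I would divide the first sum by $e(H_1)p_1^{e(H_1)-1}$ and the second by $e(H_2)p_2^{e(H_2)-1}$, so that the $E$-term in the first sum has coefficient $p_1^{1-|E|}/e(H_1)$ and in the second $(-1)^{|E|}p_2^{1-|E|}/e(H_2)$. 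The $(p_1,p_2)$-common inequality from Definition~\ref{defn:commonPair} then reads: the sum of these two series is at least $\frac{p_1}{e(H_1)} + \frac{p_2}{e(H_2)}$.

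The final step is to isolate the ``small'' terms. The $|E|=0$ term contributes $t(H_i[\emptyset],U) = 1$ (the empty product integrates to $1$), giving $\frac{p_1}{e(H_1)}$ from $H_1$ and $\frac{p_2}{e(H_2)}$ from $H_2$ (the sign $(-1)^0 = 1$), which exactly cancels the right-hand side. The $|E|=1$ terms contribute $\sum_{e\in E(H_1)} t(H_1[\{e\}],U)/e(H_1) + \sum_{e\in E(H_2)} (-1)^1 t(H_2[\{e\}],U)/e(H_2)$; but $t(H_i[\{e\}],U) = t(K_2,U) = \int U$ for every single edge $e$ (the other vertices integrate out), so the $H_1$ contribution is $t(K_2,U)$ and the $H_2$ contribution is $-t(K_2,U)$, and these cancel. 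What remains after moving everything to one side is precisely the expression \eqref{eq:expansionBound} with the sums restricted to $|E|\ge 2$, so $(H_1,H_2)$ is $(p_1,p_2)$-common if and only if \eqref{eq:expansionBound} is non-negative for every such $U$. Conversely, since every kernel $U$ with $-p_1 \le U \le p_2$ arises as $W_1 - p_1$ for the graphon $W_1 = U + p_1$ (with $W_2 = 1 - W_1 = p_2 - U$), the equivalence runs in both directions.

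This argument is essentially bookkeeping; there is no serious obstacle. The only point requiring a little care is the cancellation of the $|E|\le 1$ terms, i.e.\ checking that $t(H_i[E],U)$ depends only on $|E|$ when $|E|\le 1$ and that the signs and normalizing factors line up so that exactly the right-hand side and the degree-$\le 1$ terms disappear. One should also note the harmless convention that a single-edge (or edgeless) graph on $|V(H_i)|$ vertices has homomorphism density equal to that of $K_2$ (resp.\ the one-vertex graph), since the isolated vertices contribute factors of $\int_0^1 1\,dx = 1$.
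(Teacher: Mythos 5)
Your proposal is correct and follows essentially the same route as the paper's proof: the same substitution $U = W_1 - p_1$ (with $W_2 - p_2 = -U$), the same application of Lemma~\ref{lem:expansion} together with $t(F,-U) = (-1)^{e(F)}t(F,U)$, and the same cancellation of the $|E|=0$ term against the right-hand side and of the $|E|=1$ terms against each other. The only point you make more explicitly than the paper is the converse direction, noting that every admissible kernel $U$ arises from a graphon $W_1 = p_1 + U$; this is a worthwhile remark but not a difference in method.
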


\begin{proof}
Let $W_1$ and $W_2$ be graphons such that $W_1+W_2=1$ and let $U=W_1-p_1$. Then $W_1=p_1+U$ and $W_2=p_2-U$. Moreover, it holds that $-p_1 \leq U(x,y)\leq 1-p_1=p_2$ for all $x,y\in [0,1]$ since $W$ is a graphon. By Lemma~\ref{lem:expansion} and the fact that $t(F,cU) = c^{e(F)}t(F,U)$ every kernel $U$ and $c\in\mathbb{R}$, the expression
\[\frac{t(H_1,W_1)}{e(H_1)p_1^{e(H_1)-1}}+\frac{t(H_2,W_2)}{e(H_2)p_2^{e(H_2)-1}}\]
can be rewritten as
\[\sum_{E\subseteq E(H_1)}\frac{p_1^{e(H_1)-|E|}t(H_1[E],U)}{e(H_1)p_1^{e(H_1)-1}} + \sum_{E\subseteq E(H_2)}(-1)^{|E|}\frac{p_2^{e(H_2)-|E|}t(H_2[E],U)}{e(H_2)p_2^{e(H_2)-1}}\]
or, equivalently,
\[\sum_{E\subseteq E(H_1)}\frac{t(H_1[E],U)}{e(H_1)p_1^{|E|-1}} + \sum_{E\subseteq E(H_2)}(-1)^{|E|}\frac{t(H_2[E],U)}{e(H_2)p_2^{|E|-1}}.\]
The contribution of the term $E=\emptyset$ to this expression is precisely $\frac{p_1}{e(H_1)}+\frac{p_2}{e(H_2)}$. Thus,  $(H_1,H_2)$ is $(p_1,p_2)$-common if and only if the other terms sum to a non-negative value for every possible choice of $U$. The edge sets of cardinality one contribute a total of $t(K_2,U)$ to the first summation and $-t(K_2,U)$ to the second, and so their contributions cancel. This completes the proof. 
\end{proof}

Our focus in the rest of this section and the one that follows it is to use Lemma~\ref{lem:expansionLem} to obtain necessary conditions for a pair $(H_1,H_2)$ to be $(p,1-p)$-common. For this, we will use several different operations on kernels. The first can be thought of as ``scaling down'' a kernel onto a $\delta$-proportion of the vertex set. 

\begin{defn}
Given a kernel $U$ and a real number $\delta\in (0,1]$, let $U^\delta$ be the kernel such that, for $x,y\in [0,1]$,
\[U^\delta(x,y)=\begin{cases}U(x/\delta,y/\delta)& \text{if }x,y\in [0,\delta],\\ 0 &\text{otherwise}.\end{cases}\]
\end{defn}

Given a graph $F$, let $i(F)$ be the number of isolated vertices of $F$. 

\begin{lem}
\label{lem:scaleDown}
For any graph $H$, kernel $U$ and $\delta\in (0,1]$,
\[t(F,U^\delta) = \delta^{v(F)-i(F)}t(F,U).\]
\end{lem}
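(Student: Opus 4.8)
The statement is essentially a direct computation from the definition of $t(F,U^\delta)$, so the plan is to expand the integral and change variables. First I would write
\[t(F,U^\delta) = \int\displaylimits_{[0,1]^{V(F)}}\prod_{uv\in E(F)}U^\delta(x_u,x_v)\,dx_{V(F)}.\]
The key observation is that $U^\delta(x_u,x_v)$ vanishes unless both $x_u,x_v\in[0,\delta]$, so the integrand is identically zero on any point of $[0,1]^{V(F)}$ for which some non-isolated vertex $u$ has $x_u\notin[0,\delta]$ (such a $u$ lies in some edge $uv$, forcing that factor, hence the whole product, to be $0$). Consequently the integral may be restricted to the region where $x_u\in[0,\delta]$ for every non-isolated vertex $u$, while the variables $x_w$ for isolated vertices $w$ range freely over $[0,1]$ and contribute a factor of $1$ each (there is no edge containing them).

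Next I would perform the substitution $x_u = \delta y_u$ for each non-isolated vertex $u$, so that $dx_u = \delta\,dy_u$ and $y_u$ ranges over $[0,1]$; there are $v(F)-i(F)$ such variables, producing a Jacobian factor $\delta^{v(F)-i(F)}$. Under this substitution each factor becomes $U^\delta(\delta y_u,\delta y_v) = U(y_u,y_v)$ by the definition of $U^\delta$. Pulling the constant $\delta^{v(F)-i(F)}$ out of the integral and recognizing the remaining integral as $t(F,U)$ (the free isolated-vertex integrations over $[0,1]$ again just contribute $1$) gives exactly $t(F,U^\delta) = \delta^{v(F)-i(F)}t(F,U)$.

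There is no real obstacle here; the only point requiring a little care is the bookkeeping of isolated versus non-isolated vertices — making sure that isolated vertices are handled separately (they are not scaled and contribute no Jacobian factor, which is why the exponent is $v(F)-i(F)$ rather than $v(F)$) and that the ``otherwise $= 0$'' branch of $U^\delta$ is correctly used to kill the unwanted part of the domain. A clean way to present this is to split $V(F)$ as the disjoint union of its isolated vertices and its non-isolated vertices and apply Fubini to separate the two groups of variables before substituting.
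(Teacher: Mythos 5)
Your proof is correct and follows essentially the same approach as the paper's: restrict the domain of integration using the fact that $U^\delta$ vanishes off $[0,\delta]^2$, then rescale the variables to pick up the factor $\delta^{v(F)-i(F)}$. The only cosmetic difference is that the paper first strips off isolated vertices by a short induction before changing variables, whereas you handle them inline by scaling only the non-isolated coordinates; both are fine.
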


\begin{proof}
We may assume that $F$ has no isolated vertices. Indeed, for any isolated vertex $w$, the variable corresponding to $w$ does not appear in the product $\prod_{uv\in E(F)}U(x_u,x_v)$. So, by integrating out that variable over $[0,1]$, we see that $t(F,U)=t(F-w,U)$ for any kernel $U$, and so we can conclude by induction on $v(F)$. 

So, assume that $F$ has no isolated vertices. By definition, $U^\delta(x,y)=0$ whenever $x$ or $y$ is not in $[0,\delta]$. Therefore, if $x_{V(F)}\in [0,1]^{V(F)}$ such that there is a vertex $v$ with $x_v\notin [0,\delta]$, then the product $\prod_{uv\in E(F)}U^\delta(x_u,x_v)$ is automatically zero. Therefore,
\[t(F,U^\delta)=\int\displaylimits_{[0,\delta]^{V(F)}}\prod_{uv\in E(F)}U^\delta(x_u,x_v)dx_{V(F)} = \int\displaylimits_{[0,\delta]^{V(F)}}\prod_{uv\in E(F)}U(x_u/\delta,x_v/\delta)dx_{V(F)}\]
\[=\delta^{v(F)}t(F,U).\]
This completes the proof.
\end{proof}

In order to simplify \eqref{eq:expansionBound}, it is often useful to choose $U$ to be ``regular'' in a certain sense.

\begin{defn}
Given a kernel $U$ and $x\in [0,1]$, the \emph{degree} of $x$ is defined to be
\[d_U(x):=\int_0^1U(x,y)dy.\]
\end{defn}

\begin{defn}
Given a kernel $U$ and $d\in \mathbb{R}$, we say that $U$ is \emph{$d$-regular} if $d_U(x)=d$ for almost every $x\in [0,1]$.
\end{defn}
Observe that if $U$ is $d$-regular, then $t(K_2,U) = d$.  The next lemma allows us to reduce the calculation of $t(F,U)$ to the calculation of $t(F',U)$, whenever $U$ is $d$-regular and $F'$ is a subgraph obtained from $F$ by removing vertices of degree one. 

\begin{lem}
\label{lem:dregular}
If $U$ is a $d$-regular kernel and $F$ is a graph containing a vertex $w$ of degree one, then
\[t(F,U) = d\cdot t(F-w,U).\]
\end{lem}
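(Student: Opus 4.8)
The statement is essentially a one-line manipulation of the defining integral, so the plan is to peel off the degree-one vertex $w$ by integrating out the variable $x_w$ first, using $d$-regularity to evaluate the resulting inner integral exactly. First I would let $w'$ denote the unique neighbour of $w$ in $F$, and write out $t(F,U)$ as an integral over $[0,1]^{V(F)}$ of $\prod_{uv\in E(F)}U(x_u,x_v)$. Since $w$ has degree one, the only factor in this product involving $x_w$ is $U(x_{w'},x_w)$; every other factor ranges over edges of $F-w$ and depends only on the variables $(x_v : v\in V(F-w))$. By Fubini's theorem (all kernels are bounded and measurable, so everything is integrable) I can perform the integration over $x_w\in[0,1]$ innermost, obtaining $\int_0^1 U(x_{w'},x_w)\,dx_w = d_U(x_{w'})$, which equals $d$ for almost every $x_{w'}\in[0,1]$ by $d$-regularity.

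The remaining integral is then $\int_{[0,1]^{V(F-w)}} d\cdot \prod_{uv\in E(F-w)}U(x_u,x_v)\,dx_{V(F-w)} = d\cdot t(F-w,U)$, which is exactly the claimed identity. One small point worth addressing explicitly is that $w'$ may itself have had degree one in $F$, or $w$ might lie in a component that is just the single edge $ww'$, leaving $w'$ isolated in $F-w$; none of this causes a problem because $t(F-w,U)$ is still a perfectly well-defined homomorphism density (isolated vertices simply contribute a factor of $1$, as noted in the proof of Lemma~\ref{lem:scaleDown}), and the change of the domain from $V(F)$ to $V(F-w)$ is accounted for cleanly by the Fubini step.

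There is no real obstacle here: the only thing to be careful about is the bookkeeping of which edges of $F$ survive in $F-w$ (precisely all of them except the single edge $ww'$) and the invocation of $d$-regularity on a set of full measure, which does not affect the value of the outer integral. The argument is essentially the same as the standard ``pendant vertex'' reduction used in the study of Sidorenko's conjecture and common graphs, specialized to regular kernels, and it generalizes verbatim to removing any number of degree-one vertices one at a time, each removal producing another factor of $d$.
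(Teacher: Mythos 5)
Your proof is correct and is essentially identical to the paper's: both integrate out the variable $x_w$ first, use $d$-regularity to evaluate $\int_0^1 U(x_z,x_w)\,dx_w = d$ almost everywhere, and recognize the remaining integral as $t(F-w,U)$. The extra remarks about Fubini and about $z$ possibly becoming isolated are harmless and do not change the argument.
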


\begin{proof}
Let $z$ be the unique neighbour of $w$ in $F$. We have
\[t(F,U)=\int\displaylimits_{[0,1]^{V(F)}}\prod_{uv\in E(F)}U(x_u,x_v)dx_{V(F)}\]
\[= \int\displaylimits_{[0,1]^{V(F)\setminus\{w\}}}\left(\int_0^1U(x_z,x_w)dx_w\right)\prod_{uv\in E(F)\setminus\{wz\}}U(x_u,x_v)dx_{V(F)\setminus\{w\}}=d\cdot t(F - w,U)\]
as desired.
\end{proof}

We now use the tools built up in this section to prove Theorem~\ref{th:Girth}, which we restate here for convenience.   

\repeatThm{Girth}

\begin{proof}
We prove the contrapositive. Suppose that $g(H_1)=k$ where $k$ is an odd integer and that $g(H_2)\geq k$. Note that, if $g(H_2)>k$, then $c_k(H_2)=0$; so, in any case, we can assume, without loss of generality, that
\begin{equation}\label{eq:contraCk}\frac{c_k(H_1)}{e(H_1)p_1^{k-1}} > \frac{c_k(H_2)}{e(H_2)p_2^{k-1}}.\end{equation}
Let $I_1$ and $I_2$ be intervals of length $1/2$ partitioning $[0,1]$ and let $B$ be the kernel defined by
\[B(x,y)=\begin{cases}-1&\text{if }(x,y)\in I_i^2\text{ for some }i\in\{1,2\},\\ 1&\text{otherwise}.\end{cases}\]
Observe that the homomorphism density of a cycle $C_\ell$ in $B$ is precisely $2^{-\ell}$ times the trace of the $\ell$th power of the $2 \times 2$ matrix $\left[\begin{smallmatrix} -1 & ~1 \\ ~1 & -1 \end{smallmatrix}\right]$. Since the eigenvalues of this matrix are $0$ and $-2$, we see that $t(C_\ell,B)=(-1)^\ell$.

Now, let $p=\min\{p_1,p_2\}$, let $\delta\in (0,1)$ be a small real number, to be chosen later, and define $U:=p\cdot B^\delta$. By construction, $U$ is $0$-regular. Therefore, by Lemma~\ref{lem:dregular}, we have $t(F,U)=0$ for any graph $F$ which has a vertex of degree one. In particular, since $g(H_2)\geq g(H_1)=k$, this means that $t(F,U)=0$ for any subgraph $F$ of $H_1$ or $H_2$ with fewer than $k$ edges. Also, using Lemma~\ref{lem:scaleDown}, the fact that $k$ is odd and $t(C_\ell,B)=(-1)^\ell$, we see that $$t(C_k,U)=  t(C_k, p\cdot B^\delta) = p^k t(C_k, B^\delta) = p^k\delta^k t(C_k, B) = -p^k\delta^k.$$

For $i\in \{1,2\}$, let $b_i$ be the number of subgraphs of $H_i$ with at least $k+1$ non-isolated vertices. By Lemma~\ref{lem:scaleDown}, for any graph $F$ we have $$t(F,U)\leq \delta^{v(F)-i(F)}t(F,B) \le \delta^{v(F)-i(F)},$$
as by definition of $B$, $t(F,B) \le 1$.
Putting all of this together, the expression in \eqref{eq:expansionBound} can be bounded above as follows:
\[\sum_{\substack{E\subseteq E(H_1)\\ |E|\geq2}}\frac{t(H_1[E],U)}{e(H_1)p_1^{|E|-1}} + \sum_{\substack{E\subseteq E(H_2)\\ |E|\geq2}}(-1)^{|E|}\frac{t(H_2[E],U)}{e(H_2)p_2^{|E|-1}}\]
\[\leq \frac{-c_k(H_1)p^k\delta^k}{e(H_1)p_1^{k-1}} + \frac{c_k(H_2)p^k\delta^k}{e(H_2)p_2^{k-1}}+\sum_{i=1}^2\frac{b_i\delta^{k+1}}{e(H_i)p^{e(H_i)-1}}.\]
Now, by \eqref{eq:contraCk}, if $\delta$ is chosen small enough with respect to $H_1,H_2,p_1$ and $p_2$, then the above expression is negative. This completes the proof. 
\end{proof}

From Theorem~\ref{th:Girth}, we obtain a strong constraint on $(p_1,p_2)$ in the case that $H_1$ and $H_2$ both have odd girth. 

\begin{cor}
\label{cor:specificP}
Let $H_1$ and $H_2$ be graphs of odd girth, let $k=g(H_1)$ and let $p\in(0,1)$. If $(H_1,H_2)$ is $(p,1-p)$-common, then $g(H_2)=k$ and $p=\frac{1}{\alpha+1}$ where
\[\alpha=\left(\frac{c_k(H_2)e(H_1)}{c_k(H_1)e(H_2)}\right)^{1/(k-1)}.\]
\end{cor}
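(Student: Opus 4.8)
The statement is an immediate consequence of Theorem~\ref{th:Girth}, so the plan is simply to unpack the conclusion of that theorem in the symmetric-girth case and solve for $p$. First I would apply Theorem~\ref{th:Girth} with $p_1 = p$ and $p_2 = 1-p$: since $g(H_2) = k$ is odd by hypothesis, the theorem says that if $(H_1,H_2)$ is $(p,1-p)$-common then either $g(H_1) < g(H_2)$ or $g(H_1) = g(H_2) = k$ with the displayed equation holding. But we are assuming $g(H_1) = k = g(H_2)$, so the first alternative is ruled out (it would force $g(H_1) < k$, a contradiction), and we are in the second case. Hence $g(H_2) = k$ and
\[\frac{c_k(H_1)}{e(H_1)p^{k-1}} = \frac{c_k(H_2)}{e(H_2)(1-p)^{k-1}}.\]

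\textbf{Solving for $p$.} From the displayed identity I would cross-multiply to get $c_k(H_1) e(H_2) (1-p)^{k-1} = c_k(H_2) e(H_1) p^{k-1}$, i.e.
\[\left(\frac{1-p}{p}\right)^{k-1} = \frac{c_k(H_2) e(H_1)}{c_k(H_1) e(H_2)}.\]
Note that $c_k(H_1) \geq 1$ and $c_k(H_2) \geq 1$ since both graphs have girth exactly $k$, so the right-hand side is a well-defined positive real and taking the positive $(k-1)$-st root is legitimate; this gives $\frac{1-p}{p} = \alpha$ with $\alpha$ as in the statement. Rearranging, $1 - p = \alpha p$, hence $1 = (\alpha+1)p$, so $p = \frac{1}{\alpha+1}$, exactly as claimed.

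\textbf{Obstacle.} There is essentially no obstacle here: the corollary is a direct specialization of Theorem~\ref{th:Girth} followed by elementary algebra. The only point requiring a word of care is that $\alpha$ is well-defined and positive, which follows from $c_k(H_1), c_k(H_2) \geq 1$ (a consequence of $g(H_1) = g(H_2) = k$) and $e(H_1), e(H_2) > 0$ (the graphs are non-empty, having cycles); and that the exponent $k-1 \geq 2 > 0$ so that the root is monotone and the step $\left(\frac{1-p}{p}\right)^{k-1} = \alpha^{k-1} \Rightarrow \frac{1-p}{p} = \alpha$ is valid for positive reals. I would state these observations briefly and conclude.
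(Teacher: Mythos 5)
There is a genuine gap in the first step. You write ``we are assuming $g(H_1)=k=g(H_2)$,'' but the corollary does \emph{not} assume this: the hypothesis is only that each of $H_1$ and $H_2$ has odd (finite) girth, with $k:=g(H_1)$, and the equality $g(H_2)=k$ is part of the \emph{conclusion} to be proved. A single application of Theorem~\ref{th:Girth} (with $g(H_2)$ odd) leaves open the alternative $g(H_1)<g(H_2)$, and nothing in your argument rules it out. A telltale sign is that you never use the hypothesis that $g(H_1)$ is odd, yet that hypothesis is essential: if $g(H_1)$ were allowed to be even the statement would be false, e.g.\ $(C_4,C_5)$ is $(p,1-p)$-common for more than one $p$ by Theorem~\ref{th:Flag}.

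The fix is to apply Theorem~\ref{th:Girth} a second time with the roles of $H_1$ and $H_2$ interchanged. Since the condition in Definition~\ref{defn:commonPair} is symmetric under swapping $(H_1,W_1,p_1)$ with $(H_2,W_2,p_2)$, the pair $(H_2,H_1)$ is $(1-p,p)$-common, and $g(H_1)$ is odd by hypothesis, so the theorem gives either $g(H_2)<g(H_1)$ or $g(H_2)=g(H_1)$ together with the displayed identity. Combining this with the first application (which gives $g(H_1)<g(H_2)$ or $g(H_1)=g(H_2)$) forces $g(H_1)=g(H_2)=k$ and the identity
\[\frac{c_k(H_1)}{e(H_1)p^{k-1}}=\frac{c_k(H_2)}{e(H_2)(1-p)^{k-1}}.\]
From that point on your algebra --- cross-multiplying, taking the positive $(k-1)$-st root, and solving $\tfrac{1-p}{p}=\alpha$ to get $p=\tfrac{1}{\alpha+1}$ --- is correct and is exactly what the paper does (the paper's proof is equally terse about the girth step, but your justification of it, as written, is circular).
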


\begin{proof}
Let $k=g(H_1)$. By Theorem~\ref{th:Girth}, we must also have $g(H_2)=k$. Also, we must have
\[\frac{c_k(H_1)}{e(H_1)p^{k-1}} = \frac{c_k(H_2)}{e(H_2)(1-p)^{k-1}}.\]
The result follows by solving for $p$. 
\end{proof}

\section{Tensor Products and Cliques of Order Four}
\label{sec:K4}

Our goal in this section is to prove that, if $(H_1,H_2)$ is $(p,1-p)$-common, then both of $H_1$ and $H_2$ must be $K_4$-free. For this, we will use the notion of a tensor products of kernels, which features prominently in constructions showing that certain graphs are uncommon~\cite{Thomason97,EvenZoharLinial15,CsokaHubaiLovasz23}. Fix $\psi:[0,1]\to [0,1]^2$ to be a measure preserving map, which will be used throughout the section. Note that such a $\psi$ exists as $[0,1]$ and $[0,1]^2$ are atomless standard probability spaces. For $x\in [0,1]$, let $\psi_1(x)$ and $\psi_2(x)$ be such that $\psi(x)=(\psi_1(x),\psi_2(x))$.

\begin{defn}
Let $U_1$ and $U_2$ be kernels. The \emph{tensor product} of $U_1$ and $U_2$ is the function $U_1\otimes U_2:[0,1]^2\to \mathbb{R}$ defined by
\[U_1\otimes U_2(x,y)=U_1(\psi_1(x),\psi_1(y))\cdot U_2(\psi_2(x),\psi_2(y))\]
for all $(x,y)\in[0,1]^2$. 
\end{defn}

The following (standard) lemma describes a key property of the tensor product.

\begin{lem}[E.g.~{\cite[Equation~(7.17)]{Lovasz12}}]
\label{lem:tensorLem}
If $F$ is a graph and $U_1$ and $U_2$ are kernels, then
\[t(F,U_1\otimes U_2)=t(F,U_1)\cdot t(F,U_2).\]
\end{lem}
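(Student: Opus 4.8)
The plan is to unfold the definitions of $t(F,U_1\otimes U_2)$ and $t(F,U_1)$, $t(F,U_2)$ and then transport the integral from $[0,1]^{V(F)}$ to $([0,1]^2)^{V(F)}$ via the measure-preserving map $\psi$ applied coordinatewise. Concretely, writing out the definition gives
\[t(F,U_1\otimes U_2)=\int\displaylimits_{[0,1]^{V(F)}}\prod_{uv\in E(F)}U_1(\psi_1(x_u),\psi_1(x_v))\cdot U_2(\psi_2(x_u),\psi_2(x_v))\,dx_{V(F)},\]
and since every factor in the product splits as a $U_1$-part depending only on the $\psi_1$-coordinates and a $U_2$-part depending only on the $\psi_2$-coordinates, the integrand equals
\[\left(\prod_{uv\in E(F)}U_1(\psi_1(x_u),\psi_1(x_v))\right)\left(\prod_{uv\in E(F)}U_2(\psi_2(x_u),\psi_2(x_v))\right).\]

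The next step is the change of variables. Define $\Psi\colon[0,1]^{V(F)}\to([0,1]^2)^{V(F)}$ by applying $\psi$ in each coordinate, $\Psi\big((x_u)_{u\in V(F)}\big)=\big(\psi(x_u)\big)_{u\in V(F)}$. Because $\psi$ pushes Lebesgue measure on $[0,1]$ forward to Lebesgue measure on $[0,1]^2$, the product map $\Psi$ pushes Lebesgue measure on $[0,1]^{V(F)}$ forward to Lebesgue measure on $([0,1]^2)^{V(F)}\cong[0,1]^{V(F)}\times[0,1]^{V(F)}$. Writing the image coordinates as $(y_u,z_u)=\psi(x_u)$, the integral becomes
\[\int\displaylimits_{[0,1]^{V(F)}\times[0,1]^{V(F)}}\left(\prod_{uv\in E(F)}U_1(y_u,y_v)\right)\left(\prod_{uv\in E(F)}U_2(z_u,z_v)\right)dy_{V(F)}\,dz_{V(F)}.\]

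Finally, since $U_1$, $U_2$ are bounded and the domains have finite measure, Fubini's theorem lets us factor this product of integrals, yielding $t(F,U_1)\cdot t(F,U_2)$, as desired. The only genuinely non-routine point is justifying that $\Psi$ is measure-preserving on the product space; this follows from the standard fact that a finite product of measure-preserving maps between standard probability spaces is measure-preserving, together with the identification of Lebesgue measure on $([0,1]^2)^{V(F)}$ with the $2|V(F)|$-dimensional Lebesgue measure. Everything else is bookkeeping, so the proof is short.
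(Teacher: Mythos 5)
The paper does not prove this lemma at all---it is quoted as a standard fact with a citation to L\'ov\'asz's book---so there is nothing to compare against; what you give is the standard argument, and it is correct. Unfolding the definition, splitting each edge factor into its $U_1$- and $U_2$-parts, pushing the integral forward along the coordinatewise map $\Psi$ (a finite product of measure-preserving maps is measure-preserving for the product measures), and then factoring by Fubini is exactly how this identity is established in the literature, and each step you invoke is justified since the kernels are bounded and measurable on probability spaces.
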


In order to prove Theorem~\ref{th:KFour}, we modify a construction of Jagger, \v{S}\v{t}ov\'i\v{c}ek and Thomason~\cite{JaggerStovicekThomason96} to construct a kernel $U$ such that the expression in \eqref{eq:expansionBound} is negative. The constructed kernel will be obtained by taking a tensor power of the following kernel and scaling it down to a small proportion of the vertex set. 

\begin{defn}\label{defn:kernel}
Let $I_1,I_2,I_3,I_4$ be a partition of $[0,1]$ into four intervals of length $1/4$. Let $K$ be the kernel defined by
\[K(x,y):=\begin{cases}1 & \text{if }(x,y)\in I_i^2\text{ for some }i\in\{1,2,3,4\},\\ -1 & \text{otherwise}.\end{cases}\]
\end{defn}

The next lemma can be seen as a weak form of Lemma~10 from~\cite{JaggerStovicekThomason96}; we have distilled it down to contain only the ingredients that we will need here. We note that the proof of this lemma is fairly involved and relies on Fourier analytic ideas; for this reason, we will not repeat the full analysis here. An alternative argument showing that $K_4$ is uncommon using the same kernel $K$, but without Fourier analysis, was recently provided by Cs\'oka, Hubai and Lov\'asz~\cite{CsokaHubaiLovasz23}. 

\begin{lem}[Jagger, \v{S}\v{t}ov\'i\v{c}ek and Thomason~\cite{JaggerStovicekThomason96}]
\label{lem:blackBox}
If $K$ is the kernel in Definition~\ref{defn:kernel}, then
\begin{enumerate}
\stepcounter{equation}
    \item\label{eq:K4one} $t(K_4,K)=-1/2$,
\stepcounter{equation}
    \item\label{eq:larger} $|t(F,K)|\leq 1/2$ for every graph $F$ with at least one edge, and
\stepcounter{equation}
    \item\label{eq:others} $|t(F,K)|\leq 1/4$ if $F$ is a connected graph on three or four vertices which is not isomorphic to $K_4$. 
\end{enumerate}
\end{lem}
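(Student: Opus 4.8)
The plan is to view $K$ as the edge-weighted graph on vertex set $[4]$ with adjacency matrix $M=2I_4-J_4$ and uniform vertex weights, so that $t(F,K)=4^{-v(F)}\sum_{\phi\colon V(F)\to[4]}\prod_{uv\in E(F)}M_{\phi(u)\phi(v)}$; note that $M$ is $(-1/2)$-regular as a kernel and that $M^2=4I_4$. Part~\eqref{eq:K4one} is then a finite computation: grouping the $4^4$ maps $\phi\colon V(K_4)\to[4]$ by the partition of $V(K_4)$ that they induce (five partition types) and summing the contributions gives $t(K_4,K)=-128/256=-1/2$; alternatively one uses that the eigenvalues of $M$ are $-2$ and $2$ (with multiplicity $3$). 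Part~\eqref{eq:others} is also finite: there are exactly seven connected graphs on three or four vertices other than $K_4$ — namely $P_3$, $K_3$, $P_4$, $K_{1,3}$, the paw (a triangle with a pendant edge), $C_4$, and $K_4-e$ — and $t(F,K)$ is a one-line calculation for each, using that a degree-one vertex may be removed at the cost of a factor $d_K(x)=-1/2$ (Lemma~\ref{lem:dregular}) and that $M^2=4I_4$ allows one to contract a degree-two vertex into an identification of its two neighbours; the values obtained are $\tfrac14,\tfrac14,-\tfrac18,-\tfrac18,-\tfrac18,\tfrac14,\tfrac14$, all of absolute value at most $1/4$.

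The substantive statement is~\eqref{eq:larger}. The key step is to identify $[4]$ with the group $\mathbb{Z}_2^2$ and Fourier-expand $M$: writing $\chi_g(x)=(-1)^{\langle g,x\rangle}$ for the characters of $\mathbb{Z}_2^2$, one checks that $M_{xy}=\tfrac12\bigl(-1+\sum_{g\in\mathbb{Z}_2^2\setminus\{0\}}\chi_g(x)\chi_g(y)\bigr)$. Substituting this into the formula for $t(F,K)$ and expanding the product over the edges, each term is indexed by an edge-labelling $\omega\colon E(F)\to\mathbb{Z}_2^2$, with $\omega(e)=0$ recording that the ``$-1$'' summand was taken at $e$. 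Summing over the $\phi$ and using orthogonality of characters, a term survives exactly when the boundary $\sum_{e\ni v}\omega(e)$ vanishes at every vertex $v$; writing $\omega$ coordinatewise as $(\omega_1,\omega_2)$, this says precisely that both $\omega_1$ and $\omega_2$ lie in the binary cycle space $\mathcal{C}(F)\leq\mathbb{F}_2^{E(F)}$. Tracking the signs contributed by the discarded ``$-1$'' factors yields the identity
\[t(F,K)=\frac{(-1)^{e(F)}}{2^{e(F)}}\sum_{\omega_1,\omega_2\in\mathcal{C}(F)}(-1)^{|\operatorname{supp}(\omega_1)\cup\operatorname{supp}(\omega_2)|}.\]

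To bound the right-hand side I would condition on $\omega_1$ and set $A=\operatorname{supp}(\omega_1)$: the inner sum $\sum_{\omega_2\in\mathcal{C}(F)}(-1)^{|\operatorname{supp}(\omega_2)\cap A^{c}|}$ is a character sum over the finite group $\mathcal{C}(F)$, so it equals $|\mathcal{C}(F)|$ when $\mathbf{1}_{A^{c}}$ lies in the cut space $\mathcal{C}(F)^{\perp}$ and $0$ otherwise. Since $\mathbf{1}_{A^{c}}=\mathbf{1}_{E(F)}+\omega_1$ over $\mathbb{F}_2$, the surviving $\omega_1$ are exactly those in $\mathcal{C}(F)\cap\bigl(\mathbf{1}_{E(F)}+\mathcal{C}(F)^{\perp}\bigr)$, which — if nonempty — is a coset of the bicycle space $\mathcal{C}(F)\cap\mathcal{C}(F)^{\perp}$; using $\dim\mathcal{C}(F)=e(F)-v(F)+\kappa(F)$ (with $\kappa(F)$ the number of components of $F$) this gives
\[|t(F,K)|\;\leq\;2^{-e(F)}\,|\mathcal{C}(F)|\cdot\bigl|\mathcal{C}(F)\cap\mathcal{C}(F)^{\perp}\bigr|\;=\;2^{\,\kappa(F)-v(F)+\dim(\mathcal{C}(F)\cap\mathcal{C}(F)^{\perp})}.\]
Finally, whenever $F$ has an edge, the star $\partial\{w\}$ of a non-isolated vertex $w$ is a cut but not an even subgraph (each neighbour of $w$ has degree one in it), so $\mathcal{C}(F)^{\perp}\not\subseteq\mathcal{C}(F)$ and hence $\dim(\mathcal{C}(F)\cap\mathcal{C}(F)^{\perp})\leq\dim\mathcal{C}(F)^{\perp}-1=v(F)-\kappa(F)-1$; substituting gives $|t(F,K)|\leq1/2$, which is~\eqref{eq:larger}. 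As a consistency check this also recovers~\eqref{eq:K4one}: for $K_4$ the bicycle space has dimension $2=v-\kappa-1$, the relevant coset is nonempty (the complement of any triangle is a star) and all four of its members are triangles, so the signed sum is $-4|\mathcal{C}(K_4)|$ and $t(K_4,K)=-1/2$.

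The delicate point in this plan is the sign bookkeeping in the Fourier step: one must pin down the parity of the number of discarded ``$-1$'' factors exactly, since a stray sign would wreck the displayed identity; everything after that — the character sum, the coset description, and the elementary fact $\mathcal{C}^{\perp}\not\subseteq\mathcal{C}$ — is short. For part~\eqref{eq:others} one could bypass the seven computations by observing that each of these graphs satisfies $|\mathcal{C}(F)\cap\mathcal{C}(F)^{\perp}|\leq2^{v(F)-3}$, so the displayed inequality already forces $|t(F,K)|\leq1/4$; with so few cases, though, direct evaluation is quickest. This route replaces the Fourier analysis of tensor powers used in~\cite{JaggerStovicekThomason96} (and is in the spirit of the Fourier-free argument of~\cite{CsokaHubaiLovasz23}) by a single application of the finite Fourier transform on $\mathbb{Z}_2^2$ followed by $\mathbb{F}_2$-linear algebra; conceptually $t(F,K)$ is, up to normalisation, a weight enumerator attached to $\mathcal{C}(F)$, which is why its magnitude is controlled by the bicycle space.
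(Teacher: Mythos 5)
Your proposal is correct, and it is genuinely different from what the paper does: the paper never proves Lemma~\ref{lem:blackBox}, but imports it as a distilled, weak form of Lemma~10 of~\cite{JaggerStovicekThomason96}, explicitly declining to reproduce the ``fairly involved'' Fourier-analytic argument and pointing to~\cite{CsokaHubaiLovasz23} for a Fourier-free treatment of the $K_4$ computation. You instead supply a short self-contained proof tailored to the single kernel $K$: identifying the four parts with $\mathbb{Z}_2^2$ and expanding $K$ in characters reduces $t(F,K)$ to your displayed signed sum over pairs $\omega_1,\omega_2$ in the binary cycle space (I checked the sign bookkeeping you flagged: the number of discarded $-1$ factors is $e(F)-|\operatorname{supp}\omega_1\cup\operatorname{supp}\omega_2|$, exactly as you use, and the identity correctly reproduces $t(K_2,K)=-1/2$, $t(K_3,K)=t(C_4,K)=1/4$, $t(K_4,K)=-1/2$); the inner character sum over $\mathcal{C}(F)$ collapses to a coset of the bicycle space, giving $|t(F,K)|\le 2^{\kappa(F)-v(F)+\dim(\mathcal{C}(F)\cap\mathcal{C}(F)^{\perp})}$, and the observation that a vertex star lies in $\mathcal{C}(F)^{\perp}\setminus\mathcal{C}(F)$ whenever $e(F)\ge 1$ yields \eqref{eq:larger}; your seven values for \eqref{eq:others} and the $-128/256$ computation for \eqref{eq:K4one} are all correct (and consistent with Lemma~\ref{lem:dregular} and the $M^2=4I$ contraction). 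What each route buys: the paper's citation keeps Section~\ref{sec:K4} short and leans on JST's general Lemma~10, which analyses all tensor powers at once, whereas here only bounds for the single kernel $K$ are needed because Lemma~\ref{lem:tensorLem} handles powers; your specialization to $\mathbb{Z}_2^2$ plus $\mathbb{F}_2$-linear algebra makes the paper self-contained, is elementary, and exposes the extremal structure ($K_2$ and $K_4$ attaining $1/2$ via the bicycle space). One cosmetic caveat: the aside that \eqref{eq:K4one} follows from ``the eigenvalues of $M$'' alone is not literally right---eigenvalues give trace formulas only for cycles, and for $K_4$ one needs the eigenvectors, i.e.\ the same character expansion---but since your primary partition-count computation is correct, nothing in the argument is affected.
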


We use Lemma~\ref{lem:blackBox} to prove Theorem~\ref{th:KFour} (restated below). Given a kernel $U$, let $U^{\otimes 0}:=1$ and, for $k\geq1$, let $U^{\otimes k}:=U^{\otimes (k-1)}\otimes U$. 

 \repeatThm{KFour}

\begin{proof}
Our goal is to apply Lemma~\ref{lem:expansionLem}. To that end, we construct a kernel $U$ such that the expression in \eqref{eq:expansionBound} is negative. 

Let $p:=\min\{p_1,p_2\}$. Let $k$ be an integer and let $\delta$ be a real number, both of which will be chosen later. We let $U=p\cdot\left(K^{\otimes (2k+1)}\right)^\delta$, where $K$ is defined in Definition~\ref{defn:kernel}. Then, by Lemmas~\ref{lem:scaleDown} and~\ref{lem:tensorLem}, we have
\[t(F,U) = p^{e(F)}\delta^{v(F)-i(F)}t(F,K)^{2k+1}\]
for any graph $F$. In particular, $t(K_4,U) = p^{6}\delta^4(-1/2)^{2k+1}$ by \eqref{eq:K4one}. Given a graph $F$, let $F^*$ be the graph obtained from $F$ by deleting all isolated vertices. Note that $K_2\sqcup K_2$ is the only disconnected graph on at most $4$ vertices without isolated vertices and that $t(K_2\sqcup K_2,K)=t(K_2,K)^2\leq 1/4$ by \eqref{eq:larger}. So, by Lemma~\ref{lem:blackBox}, we get
\[|t(F,U)|\leq \begin{cases}\delta^5(1/2)^{2k+1} & \text{if }v(F)-i(F)\geq 5,\\ 
(1/4)^{2k+1} & \text{if }3\leq v(F)-i(F)\leq 4\text{ and }F^*\notin\{K_2,K_4\}.\end{cases}\]

Now, for $i\in\{1,2\}$, let $b_{i}$ be the number of non-empty subsets $E$ of $E(H_i)$ such that $H_i[E]^*$ has at least five vertices. Let $s_{i}$ be the number of non-empty subsets $E$ of $E(H_i)$ such that $H_i[E]^*$ has at most four vertices and is neither isomorphic to $K_2$ nor to $K_4$. Also, let $n_i$ be the number of non-empty subsets $E$ of $E(H_i)$ such that $H_i[E]^*$ is isomorphic to $K_4$. The expression in \eqref{eq:expansionBound} can be bounded as follows:
\[\sum_{\substack{E\subseteq E(H_1)\\ |E|\geq2}}\frac{t(H_1[E],U)}{e(H_1)p_1^{|E|-1}} + \sum_{\substack{E\subseteq E(H_2)\\ |E|\geq2}}(-1)^{|E|}\frac{t(H_2[E],U)}{e(H_2)p_2^{|E|-1}}\]
\[\leq \sum_{i=1}^2\left(\frac{b_i\delta^5(1/2)^{2k+1}}{e(H_i)p_i^{e(H_i)-1}} + \frac{s_i(1/4)^{2k+1}}{p_i^5e(H_i)} + \frac{n_i\delta^4(-1/2)^{2k+1}p^6}{p_i^6e(H_i)}\right).\]
To complete the proof, first choose $\delta$ small enough with respect to $H_1,H_2,p_1$ and $p_2$ so that
\[\sum_{i=1}^2\frac{b_i\delta^5}{e(H_i)p_i^{e(H_i)-1}}< \frac{1}{2}\sum_{i=1}^2\frac{n_i\delta^4p^6}{p_i^6e(H_i)}.\]
Now, let $k$ be chosen large enough with respect to $H_1,H_2,p_1,p_2$ and $\delta$ so that
\[\sum_{i=1}^2\frac{s_i(1/4)^{2k+1}}{p_i^5e(H_i)} < \frac{1}{2}\sum_{i=1}^2\frac{n_i\delta^4(1/2)^{2k+1}p^6}{p_i^6e(H_i)}.\]
Putting this together, we get that the expression in \eqref{eq:expansionBound} is negative, and so $(H_1,H_2)$ is not $(p_1,p_2)$-common. 
\end{proof}

\begin{rem}
\label{rem:local}
In~\cite{CsokaHubaiLovasz23}, it is shown that any graph $H$ containing $K_4$ fails to be common in a certain ``local'' sense. Specifically, this corresponds to showing, in the case $p_1=p_2=1/2$ and $H_1=H_2=H$, that, for every $\varepsilon>0$, there exists a kernel $U$ with $\|U\|_\infty\leq \varepsilon$ such that the expression in \eqref{eq:expansionBound} is negative. The construction in the previous lemma easily yields a similar statement in the asymmetric setting. The only change is to start by letting $U=\min\{\varepsilon,p_1,p_2\}\cdot\left(K^{\otimes(2k+1)}\right)^\delta$ and then choosing $\delta$ and $k$ to additionally depend on $\varepsilon$ at the end of the proof. 
\end{rem}

\section{The Square and the Pentagon}
\label{sec:flags}

Next, we investigate the values of $p\in(0,1)$ such that the pair $(C_4,C_5)$ is $(p,1-p)$-common. Specifically, we prove Theorems~\ref{th:Flag} and \ref{th:CFourCFiveNot}, which we restate here for convenience.

\repeatThm{Flag}

\repeatThm{CFourCFiveNot}

The proof of Theorem~\ref{th:CFourCFiveNot} is elementary, and so we can present it now. 

\begin{proof}[Proof of Theorem~\ref{th:CFourCFiveNot}]
First, we observe that, if $p>0.518$, then
\begin{equation}
\label{eq:pIneq}
40p^4 + 32(1-p)p^3-5>0.
\end{equation}
Our goal is to show that, if \eqref{eq:pIneq} holds, then $(C_4,C_5)$ is not $(p,1-p)$-common. Let $W_1$ be the graphon defined by
\[W_1(x,y)=\begin{cases}1&\text{if }x,y\in[0,1/2] \text{ or }x,y\in(1/2,1],\\ 0&\text{otherwise}\end{cases}\]
and define $W_2=1-W_1$. Then $t(C_4,W_1)=1/8$ and $t(C_5,W_2)=0$. Thus, $(C_4,C_5)$ fails to be $(p,1-p)$-common for any $p$ such that
\[\frac{(1/8)}{4p^3}+\frac{0}{5(1-p)^4}<\frac{p}{4}+\frac{1-p}{5}\]
After a bit of rearranging, one can show that this inequality is equivalent to \eqref{eq:pIneq}. The result follows. 
\end{proof}

The proof of Theorem~\ref{th:Flag} was discovered and verified via the flag algebra method of Razborov~\cite{Razborov07}. While the ideas underpinning this method are beautiful and natural, it is unfortunately rather technical to introduce the method in full generality. For this reason, we will aim for an intuitive introduction of only those aspects of the method that we need. 

\begin{defn}
Given a graph $J$ and graphon $W$, define the \emph{induced homomorphism density} of $J$ in $W$ to be
\[t_{\ind}(J,W):=\int\limits_{[0,1]^{V(J)}}\prod_{uv\in E(J)}W(x_u,x_v)\prod_{uv\in E(\overline{J})}(1-W(x_u,x_v))dx_{V(J)}.\]
\end{defn}

Let $L_1,\dots,L_{1024}$ be the $1024=2^{\binom{5}{2}}$ labelled graphs with vertex set $[5]$, where $[n]:=\{1,2,\dots,n\}$ for all $n\geq1$. We pause for a simple observation.

\begin{obs}
\label{obs:sumToOne}
For any graphon $W$, we have $\sum_{m=1}^{1024}t_{\ind}(L_m,W)=1$.
\end{obs}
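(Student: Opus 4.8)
The plan is to prove Observation~\ref{obs:sumToOne} directly from the definition of $t_{\ind}$ by a pointwise (integrand-level) identity followed by integration. Fix a graphon $W$. For a fixed point $x_{[5]} = (x_1,\dots,x_5) \in [0,1]^{[5]}$, I want to show that $\sum_{m=1}^{1024} \prod_{uv \in E(L_m)} W(x_u,x_v) \prod_{uv \in E(\overline{L_m})}(1 - W(x_u,x_v)) = 1$, and then integrate both sides over $[0,1]^{[5]}$, using that the sum is finite so that integration commutes with summation.

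The key combinatorial step is the pointwise identity. The labelled graphs $L_1,\dots,L_{1024}$ are precisely all $2^{\binom{5}{2}}$ subsets of the $\binom{5}{2}=10$ possible edges on vertex set $[5]$; equivalently, a labelled graph $L_m$ is determined by a function $\chi_m$ from the set $\binom{[5]}{2}$ of pairs to $\{0,1\}$, where $\chi_m(uv)=1$ iff $uv \in E(L_m)$. For each such graph, the product appearing in $t_{\ind}(L_m,W)$ can be written uniformly as $\prod_{uv \in \binom{[5]}{2}} f_{uv}^{\chi_m(uv)}$, where $f_{uv} := W(x_u,x_v)$ if $\chi_m(uv)=1$ and $f_{uv}:=1-W(x_u,x_v)$ if $\chi_m(uv)=0$ — that is, the factor for pair $uv$ is $W(x_u,x_v)$ when the edge is present and $1-W(x_u,x_v)$ when it is absent. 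Summing over all $m$ is exactly summing over all functions $\chi \colon \binom{[5]}{2} \to \{0,1\}$, so by distributing the product over the sum (a standard identity $\sum_{\chi} \prod_{e} g_e(\chi(e)) = \prod_e (g_e(0)+g_e(1))$) we get
\[\sum_{m=1}^{1024}\ \prod_{uv\in E(L_m)}W(x_u,x_v)\prod_{uv\in E(\overline{L_m})}(1-W(x_u,x_v)) \;=\; \prod_{uv\in\binom{[5]}{2}}\bigl(W(x_u,x_v) + (1-W(x_u,x_v))\bigr) \;=\; 1,\]
since each factor on the right equals $1$.

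Having established this identity for every $x_{[5]}$, I would conclude by integrating: $\sum_{m=1}^{1024} t_{\ind}(L_m,W) = \int_{[0,1]^{[5]}} \sum_{m=1}^{1024} \bigl(\prod_{uv\in E(L_m)}W(x_u,x_v)\prod_{uv\in E(\overline{L_m})}(1-W(x_u,x_v))\bigr)\, dx_{[5]} = \int_{[0,1]^{[5]}} 1 \, dx_{[5]} = 1$, where swapping the (finite) sum and the integral is justified by linearity of the integral. I do not expect any real obstacle here; the only points needing a sentence of care are that the $L_m$ genuinely enumerate all labelled graphs on $[5]$ (so the sum over $m$ is the sum over all edge-subsets without repetition or omission), and that $W$ and $1-W$ are bounded measurable so each integrand is integrable. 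The content is essentially the observation that the quantities $t_{\ind}(L_m,W)$ form a probability distribution on the $1024$ labelled $5$-vertex graphs — namely the distribution of the random graph sampled by picking $x_1,\dots,x_5$ uniformly and including edge $uv$ with probability $W(x_u,x_v)$.
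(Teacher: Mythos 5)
Your proof is correct and is essentially the paper's argument run in reverse: the paper starts from $1=\int_{[0,1]^5}\prod_{uv}(W(x_u,x_v)+(1-W(x_u,x_v)))\,dx$ and expands the product, while you start from the sum and collapse it into that same product via the distributive law. The two are the same computation, so no further comment is needed.
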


\begin{proof}
Observe that
\[1=\int_{[0,1]^5}dx_1\cdots dx_5=\int_{[0,1]^5}\prod_{1\leq i\neq j\leq 5}(W(x_i,x_j) + (1-W(x_i,x_j)))dx_1\cdots dx_5.\]
Expanding the product in this integral yields the sum of $t_{\ind}(L,W)$ over all distinct graphs $L$ with vertex set $[5]$, as desired.
\end{proof}

Clearly, $t_{\ind}(J,W)=t_{\ind}(L,W)$ if $J$ and $L$ are isomorphic, and so the equation in Observation~\ref{obs:sumToOne} contains many redundant terms. To remedy this, let $J_1,\dots,J_{34}$ denote the 34 labelled graphs with vertex set $[5]$, up to isomorphism. For any graph $J$, an \emph{automorphism} of $J$ is an isomorphism from $J$ to itself. Let $\Aut(J)$ denote the set of all automorphisms of $J$ and $\aut(J):=|\Aut(J)|$. For $1\leq s\leq 34$, the number of indices $1\leq m\leq 1024$ such that $J_s$ is isomorphic to $L_m$ is precisely the number of cosets of $\Aut(J_s)$ viewed as a subgroup of the symmetric group acting on $[5]$, which is $\frac{5!}{\aut(J_s)}=\frac{120}{\aut(J_s)}$. Thus, letting $d(J_s,W):=\frac{120}{\aut(J_s)}\cdot t_{\ind}(J_s,W)$ for $1\leq s\leq 34$, we see that the equation in Observation~\ref{obs:sumToOne} can be rewritten as
\begin{equation}\label{eq:sumToOne}\sum_{s=1}^{34}d(J_s,W)=1.\end{equation}
We call $d(J_s,W)$ the \emph{induced density} of $J_s$ in $W$. 

Next, we establish a connection between $t(H,W)$ and the quantities $d(J_s,W)$ for $1\leq s\leq 34$ for any graph $H$ on at most 5 vertices. A \emph{homomorphism} from a graph $H$ to a graph $J$ is a function $\varphi:V(H)\to V(J)$ such that $\varphi(u)\varphi(v)\in E(J)$ whenever $uv\in E(H)$. Let $\hom_{\inj}(H,J)$ denote the number of injective homomorphisms from $H$ to $J$. The \emph{injective homomorphism density} of $H$ in $J$ is defined to be
\[t_{\inj}(H,J):= \frac{\hom_{\inj}(H,J)(v(J)-v(H))!}{v(J)!}.\]
In other words, $t_{\inj}(H,J)$ is the probability that a random injective function from $V(H)$ to $V(J)$ is a homomorphism. 

\begin{lem}
\label{lem:tinj}
For any graphon $W$ and graph $H$ on at most 5 vertices,
\[t(H,W)=\sum_{s=1}^{34}t_{\inj}(H,J_s)d(J_s,W).\]
\end{lem}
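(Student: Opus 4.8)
The plan is to expand $t(H,W)$ by partitioning the integration domain $[0,1]^{V(H)}$ according to which graph on vertex set $[5]$ the ``random'' $5$-point configuration realizes. Fix a graph $H$ on at most $5$ vertices; by padding with isolated vertices we may assume $v(H)=5$ (adding an isolated vertex changes neither side: $t(H,W)$ is unchanged as noted after Lemma~\ref{lem:scaleDown}, and on the right $t_{\inj}(H,J_s)$ scales correctly because $\hom_{\inj}$ and the factorial normalization absorb the extra vertex). First I would sample $x_1,\dots,x_5\in[0,1]$ uniformly and independently, and for each pair $ij$ include the edge $ij$ with ``probability'' governed by $W(x_i,x_j)$; more precisely, I would write $1=\prod_{i<j}\bigl(W(x_i,x_j)+(1-W(x_i,x_j))\bigr)$ inside the integral defining $t(H,W)$ (after first realizing $t(H,W)=\int_{[0,1]^5}\prod_{uv\in E(H)}W(x_u,x_v)\,dx$, already using $v(H)=5$), and expand. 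This is the same expansion used in Observation~\ref{obs:sumToOne}.

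The key steps, in order, are: (1) reduce to $v(H)=5$ by adding isolated vertices; (2) insert $\prod_{i<j}(W+(1-W))=1$ and expand to get $t(H,W)=\sum_{L}\int_{[0,1]^5}\prod_{uv\in E(H)}W(x_u,x_v)\prod_{uv\in E(L)}W(x_u,x_v)\prod_{uv\in E(\overline L)}(1-W(x_u,x_v))\,dx$, where $L$ ranges over the $1024$ labelled graphs on $[5]$; (3) observe that the integrand for a given $L$ is nonzero in a way that contributes exactly when $E(H')\subseteq E(L)$ after we identify $V(H)$ with $[5]$ via an injection — formally, group the $1024$ terms according to the isomorphism type $J_s$ of $L$ and, within each type, track which labellings of $[5]$ as $V(L)$ contain a fixed copy of $H$; (4) recognize that for each isomorphism class $J_s$, summing the indicator ``this labelled $L$ contains $H$ as a subgraph on the chosen vertex identification'' over all $120/\aut(J_s)$ labelled representatives and over all injections $V(H)\to[5]$ produces exactly $\hom_{\inj}(H,J_s)$, which after the normalization $(v(J_s)-v(H))!/v(J_s)!=0!/5!=1/120$ and the factor $120/\aut(J_s)$ in the definition of $d(J_s,W)$ yields the claimed coefficient $t_{\inj}(H,J_s)$; (5) note $\int_{[0,1]^5}\prod_{uv\in E(J_s)}W\prod_{uv\in E(\overline{J_s})}(1-W)\,dx = t_{\ind}(J_s,W)$, so the type-$J_s$ contribution is $t_{\inj}(H,J_s)\,d(J_s,W)$, and summing over $s$ gives the lemma.

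The bookkeeping in step (4) is the main obstacle: one has to be careful that ``$t(H,W)$ counts homomorphisms, not just copies,'' so that when $H$ has repeated structure or isolated vertices the counting by injections $V(H)\to V(L)=[5]$ and by labelled representatives of $J_s$ matches up with $\hom_{\inj}(H,J_s)$ exactly, with no over- or under-counting by $\aut(H)$ or $\aut(J_s)$. The clean way to organize this is to not break $J_s$ into labelled copies at all, but instead write $t(H,W)=\int_{[0,1]^5}\sum_{L\supseteq H'}\prod_{uv\in E(L)}W\prod_{uv\in E(\overline L)}(1-W)\,dx$ where $H'$ is $H$ regarded as a graph on $[5]$, and then symmetrize: average over the $120$ permutations of $[5]$ applied to $H'$, which turns ``$L\supseteq \sigma(H')$ for a uniformly random $\sigma$'' into exactly $t_{\inj}(H,L)$ (the probability a random injection is a homomorphism). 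Then grouping labelled $L$'s by isomorphism type and using $d(J_s,W)=\frac{120}{\aut(J_s)}t_{\ind}(J_s,W)$ finishes it. I would double-check the edge cases $H=K_2$ and $H$ with an isolated vertex explicitly against small tables as a sanity check, but no genuinely hard estimate is involved — it is a finite combinatorial identity dressed up as an integral.
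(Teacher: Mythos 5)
Your argument is correct and is essentially the paper's proof in algebraic rather than probabilistic clothing: the paper samples a $W$-random graph of order $5$ together with a uniform injection $\varphi\colon V(H)\to[5]$ and computes the probability that $\varphi$ is a homomorphism in two ways, which is exactly your expansion $t(H,W)=\sum_{L\supseteq H'}t_{\ind}(L,W)$ followed by symmetrization over the $120$ permutations of $[5]$ and grouping of the labelled graphs $L$ by isomorphism type. One small slip: the display in your step (2) should insert the factor $W+(1-W)$ only over the pairs \emph{not} in $E(H)$ (as literally written, each edge of $H$ acquires a spurious squared factor and the terms with $L\not\supseteq H'$ do not vanish), but your final paragraph states the correct identity, so nothing is lost.
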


\begin{proof}
For this proof, it is convenient to define, for a graphon $W$, the notion of a $W$-random graph. For $n\geq1$, a \emph{$W$-random graph} of order $n$ is the graph with vertex set $[n]:=\{1,\dots,n\}$ obtained by selecting $x_1,\dots,x_n$ from $[0,1]$ uniformly at random and independently of one another and adding an edge from $i\in[n]$ to $j\in[n]$ with probability $W(x_i,x_j)$. For each $1\leq m\leq 1024$, we can interpret $t_{\ind}(L_m,W)$ as being the probability that a $W$-random graph of order 5 is equal to $L_m$ (with the same vertex labelling). Thus, for $1\leq s\leq 34$, the quantity $d(J_s,W)$ is the probability that a $W$-random graph of order 5 is isomorphic to $J_s$. 

Now, for a graph $H$ with at most $5$-vertices, consider the following procedure. First, sample a $W$-random graph $J$ of order $5$. Next, choose an injective function $\varphi$ from $V(H)$ to $[5]$ uniformly at random. The probability that $\varphi$ is a homomorphism is precisely the sum over all $1\leq s\leq 34$ of the probability that $J$ is isomorphic to $J_s$ times the conditional probability that $\varphi$ is a homomorphism from $H$ to $J$ given that $J$ is isomorphic to $J_s$, which is just the injective homomorphism density of $H$ in $J_s$. That is,
\[\mathbb{P}(\varphi\text{ is a homomorphism from $H$ to $J$})=\sum_{s=1}^{34}t_{\inj}(H,J_s)d(J_s,W).\]
On the other hand, the probability that $\varphi$ is a homomorphism from $H$ to $J$ is precisely equal to the probability that each of the edges $\varphi(u)\varphi(v)$ for $uv\in E(H)$ are present in $J$. Since $\varphi$ is injective, the tuple $(x_{\varphi(u)}:u\in V(H))$ is just a uniformly random element of $[0,1]^5$. Thus, since $J$ is a $W$-random graph, we have
\[\mathbb{P}(\varphi\text{ is a homomorphism from $H$ to $J$})=\int\limits_{[0,1]^{V(H)}}\prod_{uv\in E(H)}W(x_u,x_v)d_{V(H)}=t(H,W).\]
This completes the proof. 
\end{proof}

Next, we introduce a multiplication operation. Consider the labelled graphs $F_{i,a}$ for $1\leq i\leq 4$ and $1\leq a\leq 8$ depicted below; these graphs are referred to as \emph{flags}. In each of these depictions, the square vertices are labelled $1,2$ and $3$ in increasing order from left to right and the round vertex is labelled $4$. Edges are depicted by solid lines and non-edges by dotted lines. The graphs $F_{i,1},\dots,F_{i,8}$ have precisely $i-1$ edges among the vertices $1,2$ and $3$ and are depicted on the $i$th row of the diagram. For $1\leq i\leq 4$, we let $F_i$ be the subgraph of $F_{i,1}$ induced by $\{1,2,3\}$. 
\[\ZeroZero \quad \ZeroOne \quad \ZeroTwo \quad \ZeroThree \quad \ZeroFour \quad \ZeroFive \quad \ZeroSix \quad \ZeroSeven.\]
\[\OneZeroZero \quad \OneZeroOne \quad \OneZeroTwo \quad \OneZeroThree \quad \OneZeroFour \quad \OneZeroFive \quad \OneZeroSix \quad \OneZeroSeven.\]
\[\TwoZeroZero \quad \TwoZeroOne \quad \TwoZeroTwo \quad \TwoZeroThree \quad \TwoZeroFour \quad \TwoZeroFive \quad \TwoZeroSix \quad \TwoZeroSeven.\]
\[\ThreeZeroZero \quad \ThreeZeroOne \quad \ThreeZeroTwo \quad \ThreeZeroThree \quad \ThreeZeroFour \quad \ThreeZeroFive \quad \ThreeZeroSix \quad \ThreeZeroSeven.\]
For $1\leq i\leq 4$ and a graphon $W$, let $t_{\ind,3}(F_{i},W):[0,1]^3\to \mathbb{R}$ be defined by
\[t_{\ind,3}(F_{i},W)(x_1,x_2,x_3):=\prod_{uv\in E(F_{i})}W(x_u,x_v)\prod_{uv\in E(\overline{F_{i}})}(1-W(x_u,x_v)).\]
Also, for $1\leq i\leq 4$, $1\leq a\leq 8$ and a graphon $W$, let $t_{\ind,3}(F_{i,a},W):[0,1]^3\to \mathbb{R}$ be such that
\[t_{\ind,3}(F_{i,a},W)(x_1,x_2,x_3):=\int_0^1\prod_{uv\in E(F_{i,a})}W(x_u,x_v)\prod_{uv\in E(\overline{F_{i,a}})}(1-W(x_u,x_v))dx_4.\]
Note that
\[\int_0^1\int_0^1\int_0^1t_{\ind,3}(F_{i,a},W)(x_1,x_2,x_3)dx_1dx_2dx_3=t_{\ind}(F_{i,a},W).\]
For $1\leq i\leq4$ and $1\leq a,b\leq 8$, define $t_{\ind,3}(F_{i,a}\cdot F_{i,b},W):[0,1]^3\to \mathbb{R}$ to be the function such that $t_{\ind,3}(F_{i,a}\cdot F_{i,b},W)(x_1,x_2,x_3)$ is equal to
\[\begin{cases}\displaystyle\frac{t_{\ind,3}(F_{i,a},W)(x_1,x_2,x_3)\cdot t_{\ind,3}(F_{i,b},W)(x_1,x_2,x_3)}{t_{\ind,3}(F_i,W)(x_1,x_2,x_3)}  &\text{if }t_{\ind,3}(F_i,W)(x_1,x_2,x_3)\neq0,\\
0 &\text{otherwise}.\end{cases}\]
The next lemma is key to our application of the flag algebra method. Given an $n\times n$ matrix $M$ and $1\leq a,b\leq n$, let $M(a,b)$ denote the element of $M$ in the $a$th row and $b$th column. A real $n\times n$ symmetric matrix $M$ is \emph{positive semidefinite}, written $M\succcurlyeq 0$, if all of its eigenvalues are non-negative. Alternatively, it is positive semidefinite if $z^TMz\geq0$ for every real column vector $z$ of length $n$. 

\begin{lem}
\label{lem:PSD}
For $1\leq i\leq4$, let $M_i$ be a $8\times 8$ positive semidefinite matrix. Then, for any graphon $W$,
\[\sum_{i=1}^4\sum_{a=1}^8\sum_{b=1}^8\int_0^1\int_0^1\int_0^1M_i(a,b)\cdot t_{\ind,3}(F_{i,a}\cdot F_{i,b},W)(x_1,x_2,x_3)dx_1dx_2dx_3\geq0.\]
\end{lem}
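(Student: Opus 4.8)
The plan is to fix a graphon $W$ and a triple $(x_1,x_2,x_3)\in[0,1]^3$, and analyze the integrand pointwise. For each $i$, the value $t_{\ind,3}(F_i,W)(x_1,x_2,x_3)$ records which of the three edges among $\{1,2,3\}$ are present/absent (in the pattern prescribed by $F_i$), weighted by the corresponding $W$-products; if this value is zero, every term $t_{\ind,3}(F_{i,a}\cdot F_{i,b},W)(x_1,x_2,x_3)$ is zero by definition, so that $i$ contributes nothing. So assume $t_{\ind,3}(F_i,W)(x_1,x_2,x_3)>0$. First I would observe that for this fixed triple, the value $t_{\ind,3}(F_{i,a},W)(x_1,x_2,x_3)$ is exactly $t_{\ind,3}(F_i,W)(x_1,x_2,x_3)$ times $\int_0^1 g_{i,a}(x_4)\,dx_4$, where $g_{i,a}(x_4)$ is the probability (under the independent-coin interpretation of $W$) that vertex $4$ attaches to $\{1,2,3\}$ exactly as prescribed by the flag $F_{i,a}$, given the fixed adjacencies among $\{1,2,3\}$. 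Concretely, letting $f_{i,a}(x_4):=\prod_{u4\in E(F_{i,a})}W(x_u,x_4)\prod_{u4\in E(\overline{F_{i,a}})}(1-W(x_u,x_4))$, we get $t_{\ind,3}(F_{i,a},W)(x_1,x_2,x_3)=t_{\ind,3}(F_i,W)(x_1,x_2,x_3)\cdot\int_0^1 f_{i,a}(x_4)\,dx_4$.

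The key algebraic step is then the ``division trick'': when $t_{\ind,3}(F_i,W)(x_1,x_2,x_3)\neq 0$,
\[
t_{\ind,3}(F_{i,a}\cdot F_{i,b},W)(x_1,x_2,x_3)=t_{\ind,3}(F_i,W)(x_1,x_2,x_3)\cdot\left(\int_0^1 f_{i,a}(x_4)\,dx_4\right)\left(\int_0^1 f_{i,b}(x_4)\,dx_4\right).
\]
Hence the summand for a fixed $i$ and fixed $(x_1,x_2,x_3)$ equals
\[
t_{\ind,3}(F_i,W)(x_1,x_2,x_3)\sum_{a=1}^8\sum_{b=1}^8 M_i(a,b)\,v_a v_b,
\]
where $v_a:=\int_0^1 f_{i,a}(x_4)\,dx_4$ (a real number depending on $i$ and the fixed triple). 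Writing $v=(v_1,\dots,v_8)^T$, this is $t_{\ind,3}(F_i,W)(x_1,x_2,x_3)\cdot v^T M_i v$. Since $M_i\succcurlyeq 0$ we have $v^T M_i v\ge 0$, and $t_{\ind,3}(F_i,W)(x_1,x_2,x_3)>0$ in the case we are considering (and $=0$ otherwise), so the whole summand is $\ge 0$ pointwise. Summing over $i$ and integrating over $(x_1,x_2,x_3)\in[0,1]^3$ — which is legitimate since each term is a bounded measurable function, the integrands being products/quotients of bounded measurable functions with the quotient cut off to $0$ on the vanishing set — preserves non-negativity, giving the claimed inequality.

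I expect the only genuinely delicate point to be bookkeeping: verifying that $t_{\ind,3}(F_{i,a},W)(x_1,x_2,x_3)=t_{\ind,3}(F_i,W)(x_1,x_2,x_3)\cdot\int_0^1 f_{i,a}(x_4)\,dx_4$, which hinges on the flags $F_{i,1},\dots,F_{i,8}$ all inducing the same graph $F_i$ on $\{1,2,3\}$ (true by construction — they share the $i-1$ edges among $1,2,3$), so that $\prod_{uv\in E(F_{i,a})}W\prod_{uv\in E(\overline{F_{i,a}})}(1-W)$ factors as the $\{1,2,3\}$-part (independent of $a$, equal to $t_{\ind,3}(F_i,W)(x_1,x_2,x_3)$) times the $x_4$-part $f_{i,a}(x_4)$. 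Everything else is a short computation plus monotonicity of the integral, and measurability is routine. The substantive content is the single pointwise identity plus the observation that a positive semidefinite quadratic form is non-negative.
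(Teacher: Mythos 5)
Your proof is correct and follows essentially the same route as the paper: establish pointwise non-negativity of the inner double sum for each fixed $i$ and $(x_1,x_2,x_3)$ by recognizing it as a positive semidefinite quadratic form (the paper writes it as $\tfrac{1}{t_{\ind,3}(F_i,W)}\sum_{a,b}M_i(a,b)\,t_{\ind,3}(F_{i,a},W)\,t_{\ind,3}(F_{i,b},W)$, which equals your $t_{\ind,3}(F_i,W)\cdot v^TM_iv$ after your factorization), then integrate. Your explicit factoring of $t_{\ind,3}(F_{i,a},W)$ into the $F_i$-part times the $x_4$-integral is a minor presentational elaboration, not a different argument.
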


\begin{proof}
We prove the stronger statement that
\[\sum_{a=1}^8\sum_{b=1}^8M_i(a,b)\cdot t_{\ind,3}(F_{i,a}\cdot F_{i,b},W)(x_1,x_2,x_3)\geq0\]
for all $1\leq i\leq 4$ and $x_1,x_2,x_3\in [0,1]$. First, if $t_{\ind,3}(F_i,W)=0$, then every term of the sum is zero and we are done. Otherwise, we get
\[\sum_{a=1}^8\sum_{b=1}^8M_i(a,b)\cdot t_{\ind,3}(F_{i,a}\cdot F_{i,b},W)(x_1,x_2,x_3)\]
\[= \frac{\sum_{a=1}^8\sum_{b=1}^8M_i(a,b)\cdot t_{\ind,3}(F_{i,a},W)(x_1,x_2,x_3)\cdot t_{\ind,3}(F_{i,b},W)(x_1,x_2,x_3)}{t_{\ind,3}(F_i,W)(x_1,x_2,x_3)}\]
which is non-negative because $M_i$ is positive semidefinite.
\end{proof}

In describing the way that we apply the flag algebra method, the final piece of the puzzle is to relate the integrals of the functions $t(F_{i,a}\cdot F_{i,b},W)(x_1,x_2,x_3)$ to the densities $d(J_s,W)$ for $1\leq s\leq 34$. This boils down to a simple double-counting argument akin to the proof of the Handshaking Lemma. Let us introduce this argument with an example. Consider the integral
\[\int_0^1\int_0^1\int_0^1 t_{\ind,3}(F_{1,1}\cdot F_{1,1},W)(x_1,x_2,x_3)dx_1dx_2dx_3.\]
Recall that $F_{1,1}$ is the edgeless graph with vertex set $[4]$. After a change of variables, we see that the above integral can be rewritten as
\[\int_{[0,1]^5}\prod_{1\leq i<j\leq 3}(1-W(x_i,x_j)) \prod_{i=1}^3(1-W(x_i,x_4))\prod_{i=1}^3(1-W(x_i,x_5))dx_1\cdots dx_5.\]
Multiplying the integrand by $1=(W(x_4,x_5) + (1-W(x_4,x_5)))$ and expanding yields the sum of the induced homomorphism densities of the $5$-vertex graph with zero edges and the $5$-vertex graph with one edge; call these graphs $J_1$ and $J_2$, respectively. Recalling that $t_{\ind}(J,W)=\frac{\aut(J)}{120}\cdot d(J,W)$ for any graph $J$ on five vertices, we see that 
\[\int_0^1\int_0^1\int_0^1 t_{\ind,3}(F_{1,1}\cdot F_{1,1},W)(x_1,x_2,x_3)dx_1dx_2dx_3=d(J_1,W) + \frac{12}{120}\cdot d(J_2,W).\]
Similarly, for any $1\leq i\leq 4$ and $1\leq a,b\leq 8$, the integral of $t_{\ind,3}(F_{i,a}\cdot F_{i,b},W)(x_1,x_2,x_3)$ over $[0,1]^3$ can be expressed as $\frac{\aut(J_p)}{120}\cdot d(J_p,W) + \frac{\aut(J_q)}{120}\cdot d(J_q,W)$ for two particular $5$-vertex graphs $J_p$ and $J_q$ constructed as follows. Start with the disjoint union of $F_{i,a}$ and $F_{i,b}$. Then, for each $1\leq j\leq 3$, identify the vertex labelled $j$ in $F_{i,a}$ with the vertex labelled $j$ in $F_{i,b}$. Let $J_p$ be the $5$-vertex graph obtained from this identification, and $J_q$ be the graph obtained from $J_p$ by adding an edge between the vertices labelled $4$ in $F_{i,a}$ and $F_{i,b}$. For each graph $J$ on $5$ vertices, if we let $c(F_{i,a},F_{i,b},J)$ be equal to $\frac{\aut(J)}{120}$ if $J\in\{J_p,J_q\}$ and zero otherwise, then we get the following
\begin{equation}
\label{eq:doubleCount}
\int_0^1\int_0^1\int_0^1 t_{\ind,3}(F_{i,a}\cdot F_{i,b},W)(x_1,x_2,x_3)dx_1dx_2dx_3 = \sum_{s=1}^{34}c(F_{i,a},F_{i,b},J_s)\cdot d(J_s,W).
\end{equation}
The next lemma illustrates the way in which the ideas presented in this section can be used to show that a pair $(H_1,H_2)$ of graphs on at most 5 vertices is $(p_1,p_2)$-common. This, of course, can be generalized to different sets of flags and larger graphs, but we will not need such a general statement in this paper. 

\begin{lem}
\label{lem:flagLem}
Let $H_1$ and $H_2$ be graphs on at most 5 vertices and $p_1,p_2 \in (0, 1)$ such that $p_1 + p_2 = 1$. If there exist $8\times 8$ positive semidefinite matrices $M_1,M_2,M_3$ and $M_4$ such that
\[\frac{t_{\inj}(H_1,J)}{e(H_1)p_1^{e(H_1)-1}} + \frac{t_{\inj}(H_2,\overline{J})}{e(H_2)p_2^{e(H_2)-1}} - \sum_{i=1}^4\sum_{a=1}^8\sum_{b=1}^8M_i(a,b)c(F_{i,a},F_{i,b},J)\geq\frac{p_1}{e(H_1)}+\frac{p_2}{e(H_2)}\]
for every graph $J$ on $5$ vertices, then $(H_1, H_2)$ is $(p_1, p_2)$-common.
\end{lem}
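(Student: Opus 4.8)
The plan is to combine the identities already developed in this section into a single inequality and then apply Lemma~\ref{lem:PSD}. Suppose we are given $8\times 8$ positive semidefinite matrices $M_1,M_2,M_3,M_4$ satisfying the hypothesis. Fix an arbitrary graphon $W$ and set $W_1:=W$, $W_2:=1-W$; since $\overline{J}$ plays the role of the complement of a $5$-vertex graph, we will exploit the fact that $t_{\ind}(J,W_2)=t_{\ind}(\overline{J},W_1)$, hence $d(J,W_2)=d(\overline{J},W_1)$ (the automorphism group is unchanged under complementation). First I would multiply the hypothesised inequality through by $d(J,W)$ and sum over all $34$ isomorphism types $J=J_s$ of $5$-vertex graphs.

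The three resulting sums are handled separately. For the $H_1$ term, Lemma~\ref{lem:tinj} gives $\sum_s t_{\inj}(H_1,J_s)d(J_s,W)=t(H_1,W_1)$, so that block contributes $t(H_1,W_1)/(e(H_1)p_1^{e(H_1)-1})$. For the $H_2$ term, I would re-index the sum over $J_s$ by passing to complements: as $J_s$ ranges over all $5$-vertex types so does $\overline{J_s}$, and $d(J_s,W_1)=d(\overline{J_s},W_2)$, so $\sum_s t_{\inj}(H_2,\overline{J_s})d(J_s,W_1)=\sum_s t_{\inj}(H_2,J_s)d(J_s,W_2)=t(H_2,W_2)$ by Lemma~\ref{lem:tinj} applied to $W_2$; this block contributes $t(H_2,W_2)/(e(H_2)p_2^{e(H_2)-1})$. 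For the flag term, interchanging the order of summation and applying \eqref{eq:doubleCount} turns $\sum_s \big(\sum_{i,a,b}M_i(a,b)c(F_{i,a},F_{i,b},J_s)\big)d(J_s,W)$ into $\sum_{i,a,b}M_i(a,b)\int_{[0,1]^3}t_{\ind,3}(F_{i,a}\cdot F_{i,b},W)(x_1,x_2,x_3)\,dx_1dx_2dx_3$, which is exactly the quantity shown to be non-negative in Lemma~\ref{lem:PSD}. Finally, the right-hand side $\big(\tfrac{p_1}{e(H_1)}+\tfrac{p_2}{e(H_2)}\big)$ summed against $d(J_s,W)$ is just that constant times $\sum_s d(J_s,W)=1$ by \eqref{eq:sumToOne}.

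Putting the four pieces together, the summed inequality reads
\[\frac{t(H_1,W_1)}{e(H_1)p_1^{e(H_1)-1}}+\frac{t(H_2,W_2)}{e(H_2)p_2^{e(H_2)-1}}-\Big(\text{non-negative flag term}\Big)\geq \frac{p_1}{e(H_1)}+\frac{p_2}{e(H_2)},\]
and since the subtracted flag term is non-negative by Lemma~\ref{lem:PSD}, we may drop it to obtain exactly the inequality in Definition~\ref{defn:commonPair}. As $W$ was arbitrary, $(H_1,H_2)$ is $(p_1,p_2)$-common.

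The only step requiring genuine care is the complementation bookkeeping in the $H_2$ block: one must check that replacing $J$ by $\overline{J}$ is a bijection on the $34$ isomorphism types and that the weighting factor $d(\cdot,\cdot)=\tfrac{120}{\aut(\cdot)}t_{\ind}(\cdot,\cdot)$ transforms correctly, i.e. that $t_{\ind}(\overline{J},1-W)=t_{\ind}(J,W)$ (immediate from the definition, swapping the roles of the edge and non-edge products) and that $\aut(\overline{J})=\aut(J)$. Everything else is a direct substitution of the named lemmas and identities, so I would write the proof as a short chain of equalities with a pointer to each ingredient.
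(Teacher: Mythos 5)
Your proposal is correct and follows essentially the same route as the paper: both arguments convert the two homomorphism densities into sums over the $34$ five-vertex types via Lemma~\ref{lem:tinj} and the identity $d(J,W)=d(\overline{J},1-W)$, discharge the flag term with \eqref{eq:doubleCount} and Lemma~\ref{lem:PSD}, and finish with \eqref{eq:sumToOne}. The only cosmetic difference is that you take the weighted average of the hypothesised inequalities against the non-negative weights $d(J_s,W)$, whereas the paper bounds the sum below by the minimum over $s$; these are interchangeable.
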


\begin{proof}
Let $W$ be any graphon. By Lemma~\ref{lem:tinj},
\[t(H_1,W) = \sum_{i=1}^{34}t_{\inj}(H_1,J_s)d(J_s,W).\]
Note that $d(J,W)=d(\overline{J},1-W)$ for any graph $J$ and so, by another application of Lemma~\ref{lem:tinj},
\[t(H_2,1-W) = \sum_{i=1}^{34}t_{\inj}(H_2,\overline{J_s})d(\overline{J_s},1-W) = \sum_{i=1}^{34}t_{\inj}(H_2,\overline{J_s})d(J_s,W).\]
Combining these two equalities, we get that
\[\frac{t(H_1,W)}{e(H_1)p_1^{e(H_1)-1}} + \frac{t(H_2,1-W)}{e(H_2)p_2^{e(H_2)-1}} = \sum_{s=1}^{34}\left(\frac{t_{\inj}(H_1,J_s)}{e(H_1)p_1^{e(H_1)-1}} + \frac{t_{\inj}(H_2,\overline{J_s})}{e(H_2)p_2^{e(H_2)-1}}\right)d(J_s,W).\]
By combining Lemma~\ref{lem:PSD} and \eqref{eq:doubleCount}, we see that the above expression is bounded below by
\[\sum_{s=1}^{34}\left(\frac{t_{\inj}(H_1,J_s)}{e(H_1)p_1^{e(H_1)-1}} + \frac{t_{\inj}(H_2,\overline{J_s})}{e(H_2)p_2^{e(H_2)-1}} - \sum_{i=1}^4\sum_{a=1}^8\sum_{b=1}^8M_i(a,b)c(F_{i,a},F_{i,b},J_s)\right)d(J_s,W).\]
Finally, by \eqref{eq:sumToOne}, this is at least
\[\min_{1\leq s\leq 34}\left(\frac{t_{\inj}(H_1,J_s)}{e(H_1)p_1^{e(H_1)-1}} + \frac{t_{\inj}(H_2,\overline{J_s})}{e(H_2)p_2^{e(H_2)-1}} - \sum_{i=1}^4\sum_{a=1}^8\sum_{b=1}^8M_i(a,b)c(F_{i,a},F_{i,b},J_s)\right)\]
which is at least $\frac{p_1}{e(H_1)}+\frac{p_2}{e(H_2)}$ by the hypothesis of the lemma. This completes the proof. 
\end{proof}

Finally, we use this lemma to prove Theorem~\ref{th:Flag}. 

\begin{proof}[Proof of Theorem~\ref{th:Flag}]
First, let us show that $(C_4,C_5)$ is $(1/2,1/2)$-common. We define the following four matrices:
\[M_1:=\frac{1}{8640}\begin{bmatrix}
25704 & 6806 & 6806 & 6806 & -8112 & -8112 & -8112 & -21786 \\ 
6806 & 10512 & -1080 & -1080 & 481 & 481 & -9334 & -6786 \\ 
6806 & -1080 & 10512 & -1080 & 481 & -9334 & 481 & -6786 \\ 
6806 & -1080 & -1080 & 10512 & -9334 & 481 & 481 & -6786 \\ 
-8112 & 481 & 481 & -9334 & 11304 & -720 & -720 & 6620 \\ 
-8112 & 481 & -9334 & 481 & -720 & 11304 & -720 & 6620 \\ 
-8112 & -9334 & 481 & 481 & -720 & -720 & 11304 & 6620 \\ 
-21786 & -6786 & -6786 & -6786 & 6620 & 6620 & 6620 & 22284 \end{bmatrix}\]
\[M_2:=\frac{1}{8640}\begin{bmatrix}
52260 & 4029 & 4029 & 31650 & -33264 & -7632 & -7632 & -43440 \\ 
4029 & 19224 & -5400 & -864 & 864 & -5469 & -6120 & -6264 \\ 
4029 & -5400 & 19224 & -864 & 864 & -6120 & -5469 & -6264 \\ 
31650 & -864 & -864 & 35976 & -35484 & -111 & -111 & -30192 \\ 
-33264 & 864 & 864 & -35484 & 37140 & 216 & 216 & 29448 \\ 
-7632 & -5469 & -6120 & -111 & 216 & 14586 & -1230 & 5760 \\ 
-7632 & -6120 & -5469 & -111 &  216 & -1230 & 14586 & 5760 \\ 
-43440 & -6264 & -6264 & -30192 & 29448 & 5760 & 5760 & 45192\end{bmatrix}\]
\[M_3:=\frac{1}{8640}\begin{bmatrix}
40104 & 7458 & 11880 & 11880 & -10095 & -10095 & -17496 & -33636 \\ 
7458 & 33852 & -3888 & -3888 & 978 & 978 & -15768 & -19722 \\
11880 & -3888 & 16560 & 3390 & -3660 & -12075 & -2463 & -9744 \\ 
11880 & -3888 & 3390 & 16560 & -12075 & -3660 & -2463 & -9744 \\ 
-10095 & 978 & -3660 & -12075 & 15366 & -144 & 105 & 9525 \\ 
-10095 & 978 & -12075 & -3660 & -144 & 15366 & 105 & 9525 \\
-17496 & -15768 & -2463 & -2463 & 105 & 105 & 24372 & 13608 \\   
-33636 & -19722 & -9744 & -9744 & 9525 & 9525 & 13608 & 40188\end{bmatrix}\]
\[M_4:=\frac{1}{8640}\begin{bmatrix}
15552 & 4912 & 4912 & 4912 & -5166 & -5166 & -5166 & -14790 \\ 
4912 & 10504 & -1728 & -1728 & 824 & 824 & -8640 & -4968 \\ 
4912 & -1728 & 10504 & -1728 & 824 & -8640 & 824 & -4968 \\ 
4912 & -1728 & -1728 & 10504 & -8640 & 824 & 824 & -4968 \\ 
-5166 & 824 & 824 & -8640 & 10760 & -1694 & -1694 & 4786 \\ 
-5166 & 824 & -8640 & 824 & -1694 & 10760 & -1694 & 4786 \\ 
-5166 & -8640 & 824 & 824 & -1694 & -1694 & 10760 & 4786 \\ 
-14790 & -4968 & -4968 & -4968 & 4786 & 4786 & 4786 & 15336\end{bmatrix}\]
All four of these matrices are positive semidefinite. To complete the proof, all that is required is to show that the inequality in Lemma~\ref{lem:flagLem} is satisfied for every graph $J$ on $5$ vertices. In fact, this inequality turns out to be an exact equality for every such graph. It would be incredibly tedious to check this inequality for every such graph by hand. For the purposes of illustration, let us just verify two simple examples; the rest of these calculations have been included in Appendix~A in an ancillary file submitted with the arxiv version of this paper:  \url{https://arxiv.org/src/2208.02045/anc/commonExtraAppendices.pdf}. Suppose that $J=K_5$. Then $t_{\inj}(C_4,K_5)=1$ and $t_{\inj}(C_5,\overline{K_5})=0$. One can check that $c(F_{4,8},F_{4,8},K_5)=1$ and that $c(F_{i,a},F_{i,b},K_5)=0$ for all other choices of $i,a$ and $b$. Therefore, in the case $J=K_5$, the left side of the inequality in Lemma~\ref{lem:flagLem} is
\[\frac{1}{4\cdot (1/2)^3} + \frac{0}{5\cdot (1/2)^4} - \frac{15336}{8640}= \frac{9}{40}=\frac{1/2}{4}+\frac{1/2}{5}\]
as desired. For a slightly more involved example, suppose that $J=K_3\sqcup K_2$. Then $t_{\inj}(C_4,J)=t_{\inj}(C_5,\overline{J})=0$. We have 
\[c(F_{2,4},F_{2,4},J)=\frac{12}{120},\]
\[c(F_{2,4},F_{2,5},J)=c(F_{2,5},F_{2,4},J)=\frac{12}{120},\]
\[c(F_{4,1},F_{4,1},J)=\frac{12}{120},\]
and $c(F_{i,a},F_{i,b},K_5)=0$ for all other choices of $i,a$ and $b$.
Therefore, the left side of the inequality in Lemma~\ref{lem:flagLem} is
\[0-\frac{35976\cdot 12}{8640\cdot 120}+\frac{35484\cdot 12}{8640\cdot 120}+\frac{35484\cdot 12}{8640\cdot 120}-\frac{15552\cdot 12}{8640\cdot 120}=\frac{9}{40}=\frac{1/2}{4}+\frac{1/2}{5}\]
as desired.

The proof for $(1/3,2/3)$ is similar, except that the matrices $M_1,M_2,M_3,M_4$ are replaced with $\frac{1}{53084160}$ multiplied by the following four matrices, respectively:

\[\fontsize{9}{10.8}\selectfont\begin{bmatrix}
42246144 & 582094 & 582094 & 582094 & -42135552 & -42135552 & -42135552 & -92141034 \\ 
582094 & 49243392 & -15842304 & -15842304 & 7796736 & 7796736 & -46514496 & -13049856 \\ 
582094 & -15842304 & 49243392 & -15842304 & 7796736 & -46514496 & 7796736 & -13049856 \\ 
582094 & -15842304 & -15842304 & 49243392 & -46514496 & 7796736 & 7796736 & -13049856 \\ 
-42135552 & 7796736 & 7796736 & -46514496 & 138599424 & 21676032 & 21676032 & 96865624 \\ 
-42135552 & 7796736 & -46514496 & 7796736 & 21676032 & 138599424 & 21676032 & 96865624 \\ 
-42135552 & -46514496 & 7796736 & 7796736 & 21676032 & 21676032 & 138599424 & 96865624 \\ 
-92141034 & -13049856 & -13049856 & -13049856 & 96865624 & 96865624 & 96865624 & 312532992\end{bmatrix}\]
\[\fontsize{9}{10.8}\selectfont\begin{bmatrix}
107984172 & -18972513 & -18972513 & 41642346 &-163858752 & -76966047 & -76966047 & -243081216 \\ 
-18972513 & 98249718 & -4893696 & -36052992 & 56408379 & -75589632 & -25408512 & 11747760 \\ 
-18972513 & -4893696 & 98249718 & -36052992 & 56408379 & -25408512 & -75589632 & 11747760 \\ 
41642346 & -36052992 & -36052992 & 162533628 & -171260478 & -38375424 & -38375424 & -198826518 \\ 
-163858752 & 56408379 & 56408379 & -171260478 & 487876608 & 83816640 & 83816640 & 233625600 \\ 
-76966047 & -75589632 & -25408512 & -38375424 & 83816640 & 359826702 & 20736000 & 244463616 \\ 
-76966047 & -25408512 & -75589632 & -38375424 & 83816640 & 20736000 & 359826702 & 244463616 \\ 
-243081216 & 11747760 & 11747760 & -198826518 & 233625600 & 244463616 & 244463616 & 1200867840\end{bmatrix}\]
\[\fontsize{9}{10.8}\selectfont\begin{bmatrix}
101172144 & 5282292 & -6580224 & -6580224 & -85681152 & -85681152 & -110108160 & -214922952 \\ 
5282292 & 126774228 & -8879739 & -8879739 & -55406592 & -55406592 & -39429504 & -177831936 \\ 
-6580224 & -8879739 & 151234560 & -62871552 & -3815424 & -96242688 & -30360144 & -4455024 \\ 
-6580224 & -8879739 & -62871552 & 151234560 & -96242688 & -3815424 & -30360144 & -4455024 \\ 
-85681152 & -55406592 & -3815424 & -96242688 & 466255872 & -44568576 & 109264896 & 245402496 \\ 
-85681152 & -55406592 & -96242688 & -3815424 & -44568576 & 466255872 & 109264896 & 245402496 \\ 
-110108160 & -39429504 & -30360144 & -30360144 & 109264896 & 109264896 & 367946496 & 108512640 \\ 
-214922952 & -177831936 & -4455024 & -4455024 & 245402496 & 245402496 & 108512640 & 1267716096\end{bmatrix}\]
\[\fontsize{9}{10.8}\selectfont\begin{bmatrix}
64971648 & -8197220 & -8197220 & -8197220 & -54645562 & -54645562 & -54645562 & -93533184 \\ 
-8197220 & 104755556 & -34466080 & -34466080 & 15801488 & 15801488 & -69672960 & -1575936 \\ 
-8197220 & -34466080 & 104755556 & -34466080 & 15801488 & -69672960 & 15801488 & -1575936 \\ 
-8197220 & -34466080 & -34466080 & 104755556 & -69672960 & 15801488 & 15801488 & -1575936 \\ 
-54645562 & 15801488 & 15801488 & -69672960 & 214917120 & 22387584 & 22387584 & 70060032 \\ 
-54645562 & 15801488 & -69672960 & 15801488 & 22387584 & 214917120 & 22387584 & 70060032 \\ 
-54645562 & -69672960 & 15801488 & 15801488 & 22387584 & 22387584 & 214917120 & 70060032 \\ 
-93533184 & -1575936 & -1575936 & -1575936 & 70060032 & 70060032 & 70060032 & 346816512\end{bmatrix}\]
Again, the verification of the hypotheses of Lemma~\ref{lem:flagLem} has been included in Appendix~B in an ancillary file submitted with the arxiv version of this paper:  \url{https://arxiv.org/src/2208.02045/anc/commonExtraAppendices.pdf}. 
\end{proof}

The ideas presented in this section and calculations in the appendices provide enough information to verify the proof of Theorem~\ref{th:Flag}, but say nothing about the way in which the proof was discovered. Clearly, to find the eight matrices that we used in the proof by hand would be a nightmare. Fortunately, the problem of finding such matrices can be phrased in terms of a semidefinite program (SDP) which can be solved by a computer. If one is trying to prove that a pair $(H_1,H_2)$ of graphs on at most $5$ vertices is $(p_1,p_2)$-common using the flags $F_{i,a}$ in this section, then the relevant semidefinite program asks for the maximum possible value of some variable, say $t$, subject to the constraints 
\[t\leq \frac{t_{\inj}(H_1,J_s)}{e(H_1)p_1^{e(H_1)-1}} + \frac{t_{\inj}(H_2,\overline{J_s})}{e(H_2)p_2^{e(H_2)-1}} - \sum_{i=1}^4\sum_{a=1}^8\sum_{b=1}^8M_i(a,b)c(F_{i,a},F_{i,b},J_s),\quad \forall s\in\{1,2,\dots,34\},\]
\[M_i\succcurlyeq 0, \quad\forall i\in\{1,2,3,4\}.\]
Note that the only variables in this SDP are $t$ and the entries of the matrices $M_1,M_2,M_3$ and $M_4$. A standard SDP solver can approximate the optimal value of such an SDP very quickly. The computer output, of course, is in the form of floating point approximation of the optimal value for $t$ and four $8\times 8$ matrices $M_1,M_2,M_3$ and $M_4$ consisting of floating point numbers. Thus, if the value of the output is close to  $\frac{p_1}{e(H_1)}+\frac{p_2}{e(H_2)}$, then it is strong evidence that $(H_1,H_2)$ is $(p_1,p_2)$-common, but not enough for a rigorous proof. To make the proof rigorous, one needs to ``round'' the floating point numbers in the matrices to exact values and verify that the rounded matrices give an optimal value of $t$ of exactly $\frac{p_1}{e(H_1)}+\frac{p_2}{e(H_2)}$. The task of rounding the entries in the matrices is non-trivial and tends to require a lot of trial and error; we will not go into details here. This describes the process that we followed to discover the proof of Theorem~\ref{th:Flag}. Many of the applications of flag algebras in the literature for minimizing expressions involving graph densities follow a similar recipe, but with a different choice of flags to suit their particular application.

\section{More Colours}
\label{sec:multicol}

Before closing the paper, let us propose a natural multicolour generalization of Definition~\ref{defn:commonPair}. We remark that the generalization of the notion of common graphs to more colours is studied in, e.g.,~\cite{JaggerStovicekThomason96,Kral+22,CummingsYoung11}.

\begin{defn}
\label{defn:commonMulticol}
Let $k\geq2$ and $p_1,p_2,\dots,p_k\in (0,1)$ such that $\sum_{i=1}^kp_i=1$ and let $H_1,H_2,\dots,H_k$ be non-empty graphs. We say that $(H_1,H_2,\dots,H_k)$ is \emph{$(p_1,p_2,\dots,p_k)$-common} if, for any graphons $W_1,W_2,\dots,W_k$ such that $\sum_{i=1}^kW_i=1$,
\[\sum_{i=1}^k\frac{t(H_i,W_i)}{e(H_i)p_i^{e(H_i)-1}}\geq \sum_{i=1}^k\frac{p_i}{e(H_i)}.\]
\end{defn}

Following~\cite{JaggerStovicekThomason96,Kral+22}, a graph $H$ is said to be \emph{$k$-common} if the $k$-tuple $(H,\dots,H)$ is $(1/k,\dots,1/k)$-common. As is the case with the ordinary notion of $k$-common graphs, it is more difficult to satisfy Definition~\ref{defn:commonMulticol} when $k$ is large. The following lemma is an asymmetric analogue of~\cite[Theorem~13]{JaggerStovicekThomason96}. 

\begin{lem}
\label{lem:kCol}
Let $k\geq2$ and $p_1,\dots,p_k,q_{k+1}\in (0,1)$ such that $\sum_{i=1}^kp_i=1$, define $q_i=(1-q_{k+1})p_i$ for $1\leq i\leq k$ and let $H_1,\dots,H_k, H_{k+1}$ be graphs. If $(H_1,\dots,H_{k})$ is not $(p_1,\dots,p_k)$-common, then $(H_1,\dots,H_{k+1})$ is not $(q_1,\dots,q_{k+1})$-common.
\end{lem}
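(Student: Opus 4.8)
The plan is to prove the contrapositive's natural form directly: assume $(H_1,\dots,H_k)$ is not $(p_1,\dots,p_k)$-common and build graphons $W_1,\dots,W_{k+1}$ with $\sum_{i=1}^{k+1}W_i=1$ witnessing the failure of the $(q_1,\dots,q_{k+1})$-common inequality. Since $(H_1,\dots,H_k)$ fails to be $(p_1,\dots,p_k)$-common, there are graphons $V_1,\dots,V_k$ with $\sum_{i=1}^k V_i=1$ and
\[\sum_{i=1}^k\frac{t(H_i,V_i)}{e(H_i)p_i^{e(H_i)-1}}< \sum_{i=1}^k\frac{p_i}{e(H_i)}.\]
The idea is to ``squeeze'' this configuration onto a $(1-q_{k+1})$-fraction of the vertex set, devoting the remaining $q_{k+1}$-fraction entirely to colour $k+1$.

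**Key steps.** First I would define, for $1\le i\le k$, the graphon $W_i := (1-q_{k+1})\cdot V_i$ — that is, scale each $V_i$ down by the constant factor $1-q_{k+1}$ (not the scaling-down operator $V_i^\delta$, just multiplication by a scalar, which keeps $W_i$ a graphon since $0\le 1-q_{k+1}<1$). Then set $W_{k+1} := 1 - \sum_{i=1}^k W_i = 1-(1-q_{k+1})\sum_{i=1}^k V_i = 1-(1-q_{k+1}) = q_{k+1}$, the constant graphon. This is a valid graphon and the constraint $\sum_{i=1}^{k+1}W_i=1$ holds by construction. Next, using $t(F,cU)=c^{e(F)}t(F,U)$, compute $t(H_i,W_i)=(1-q_{k+1})^{e(H_i)}t(H_i,V_i)$ for $1\le i\le k$, and $t(H_{k+1},W_{k+1})=q_{k+1}^{e(H_{k+1})}$ since $W_{k+1}$ is constant.

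**Assembling the inequality.** Now evaluate the left-hand side of the $(q_1,\dots,q_{k+1})$-common inequality. For $1\le i\le k$, since $q_i=(1-q_{k+1})p_i$, the $i$-th term becomes
\[\frac{t(H_i,W_i)}{e(H_i)q_i^{e(H_i)-1}}=\frac{(1-q_{k+1})^{e(H_i)}t(H_i,V_i)}{e(H_i)(1-q_{k+1})^{e(H_i)-1}p_i^{e(H_i)-1}}=(1-q_{k+1})\cdot\frac{t(H_i,V_i)}{e(H_i)p_i^{e(H_i)-1}},\]
while the $(k+1)$-th term is $\frac{q_{k+1}^{e(H_{k+1})}}{e(H_{k+1})q_{k+1}^{e(H_{k+1})-1}}=\frac{q_{k+1}}{e(H_{k+1})}$. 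Summing and applying the strict inequality from the failure of $(p_1,\dots,p_k)$-commonness gives
\[\sum_{i=1}^{k+1}\frac{t(H_i,W_i)}{e(H_i)q_i^{e(H_i)-1}}<(1-q_{k+1})\sum_{i=1}^k\frac{p_i}{e(H_i)}+\frac{q_{k+1}}{e(H_{k+1})}=\sum_{i=1}^k\frac{q_i}{e(H_i)}+\frac{q_{k+1}}{e(H_{k+1})},\]
which is exactly the negation of the $(q_1,\dots,q_{k+1})$-common inequality. Hence $(H_1,\dots,H_{k+1})$ is not $(q_1,\dots,q_{k+1})$-common.

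**Main obstacle.** This argument is essentially bookkeeping; the only thing to be careful about is the exponent arithmetic — making sure the $(1-q_{k+1})$ powers cancel to leave exactly one factor of $(1-q_{k+1})$ per term, and checking that the $e(H_i)=1$ cases cause no division-by-zero issues (they do not, since graphs are non-empty and the $p_i^{e(H_i)-1}$, $q_i^{e(H_i)-1}$ factors are just $1$ when $e(H_i)=1$, and the hypothesis $p_i,q_{k+1}\in(0,1)$ keeps everything positive). There is no genuine difficulty; the content is entirely in the scaling observation $t(F,cU)=c^{e(F)}t(F,U)$ together with the identity $q_i=(1-q_{k+1})p_i$.
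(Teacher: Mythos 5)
Your proposal is correct and is essentially identical to the paper's proof: both scale the witnessing graphons by $1-q_{k+1}$, assign the constant graphon $q_{k+1}$ to the new colour, and use $q_i=(1-q_{k+1})p_i$ to cancel the powers. (Your write-up even fixes a small typo in the paper, which sets $W_{k+1}'=q_k$ where $q_{k+1}$ is meant.)
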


\begin{proof}
Using the fact that $(H_1,\dots,H_{k})$ is not $(p_1,\dots,p_k)$-common, we let $W_1,\dots,W_k$ be graphons such that $\sum_{i=1}^kW_i=1$ and 
\[\sum_{i=1}^k\frac{t(H_i,W_i)}{e(H_i)p_i^{e(H_i)-1}}< \sum_{i=1}^k\frac{p_i}{e(H_i)}.\]
Define $W_i':=(1-q_{k+1})W_i$ for $1\leq i\leq k$ and $W_{k+1}'=q_k$. Then $\sum_{i=1}^{k+1}W_i'=1$ and
\[\sum_{i=1}^{k+1}\frac{t(H_i,W_i')}{e(H_i)q_i^{e(H_i)-1}}= \sum_{i=1}^k\frac{(1-q_{k+1})^{e(H_i)}t(H_i,W_i)}{e(H_i)(1-q_{k+1})^{e(H_i)-1}p_i^{e(H_i)-1}} + \frac{q_{k+1}^{e(H_{k+1})}}{e(H_{k+1})q_{k+1}^{e(H_{k+1})-1}}
<\sum_{i=1}^{k+1}\frac{q_i}{e(H_i)}.\]
This completes the proof. 
\end{proof}

A straightforward generalization of the proof of Theorem~\ref{th:Sid} shows that, if $H_1,\dots,H_k$ are Sidorenko, then $(H_1,\dots,H_k)$ is $(p_1,\dots,p_k)$ common for any $p_1,\dots,p_k\in (0,1)$ such that $\sum_{i=1}^kp_i=1$. The first non-bipartite examples of $k$-common graphs for $k\geq3$ were obtained recently in~\cite{Kral+22}; as shown in~\cite{KralVolecWei22+}, $k$-common graphs can have arbitrary chromatic number. It is also known that a graph $H$ is $k$-common for all $k\geq2$ if and only if it is Sidorenko~\cite[Theorem~2]{Kral+22}. Apart from these results, not much is known about $(p_1,p_2,\dots,p_k)$-common tuples of graphs; it would be interesting to explore this concept in more depth. 

We close this section with a necessary condition on $(p_1,p_2,p_3)$-common tuples of graphs in terms of their girth; note that, by Lemma~\ref{lem:kCol}, this lemma implies a necessary condition on $(p_1,\dots,p_k)$-common graphs for any $k\geq3$. This result essentially generalizes~\cite[Theorem~3]{Kral+22}, which is a strengthening of the main result of~\cite{CummingsYoung11}.

\begin{thm}
\label{th:GirthKCol}
Let $H_1,H_2$ and $H_3$ be graphs and let $p_1,p_2,p_3\in (0,1)$ such that $p_1+p_2+p_3=1$. If $\min\{g(H_1),g(H_2),g(H_3)\}$ is odd, then $(H_1,H_2,H_3)$ is not $(p_1,p_2,p_3)$-common. 
\end{thm}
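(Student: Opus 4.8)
The plan is to mimic the strategy behind Theorem~\ref{th:Girth}, using a $0$-regular ``signed bipartite'' kernel scaled down onto a small portion of the vertex set, but now distributed across all three colour classes so that the lowest-order contribution to the analogue of \eqref{eq:expansionBound} is strictly negative. First I would reduce to an expansion formula: letting $k$ be the minimum girth among $H_1,H_2,H_3$ (so $k$ is odd by hypothesis), pick graphons $W_1,W_2,W_3$ summing to $1$, set $U_i:=W_i-p_i$, and expand each $t(H_i,W_i)$ via Lemma~\ref{lem:expansion}. The constant terms $E=\emptyset$ sum to exactly $\sum_i p_i/e(H_i)$, and the singleton terms each contribute a $\pm t(K_2,U_i)$ that I will need to control. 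Unlike the two-colour case, the singleton terms do \emph{not} automatically cancel, since the three perturbations $U_1,U_2,U_3$ need not be equal; this is the first thing to deal with.

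The key device is to choose the $U_i$ to be scalar multiples of a common $0$-regular kernel. Concretely, take $B$ to be the signed $2\times2$ kernel from the proof of Theorem~\ref{th:Girth} (so $t(C_\ell,B)=(-1)^\ell$ and $B$ has all degrees equal to $0$), fix a small $\delta\in(0,1)$ and scalars $\lambda_1,\lambda_2,\lambda_3$ with $\sum_i\lambda_i=0$ and each $|\lambda_i|$ small enough that $p_i+\lambda_i B^\delta$ is a graphon, and set $W_i:=p_i+\lambda_i B^\delta$, i.e.\ $U_i=\lambda_i B^\delta$. Then $\sum_i W_i=1$ as required, each $U_i$ is $0$-regular, so by Lemma~\ref{lem:dregular} every subgraph $F$ of $H_i$ with a degree-one vertex has $t(F,U_i)=0$; in particular all subgraphs with fewer than $k$ edges contribute $0$, and the singleton ($K_2$) terms vanish outright because $t(K_2,U_i)=\lambda_i\cdot d_{B^\delta}=0$. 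Using Lemma~\ref{lem:scaleDown} together with $t(C_k,B)=(-1)^k=-1$ (as $k$ is odd), the order-$\delta^k$ part of the expansion is
\[-\delta^k\sum_{i=1}^3\frac{c_k(H_i)\,\lambda_i^k}{e(H_i)p_i^{k-1}},\]
while every subgraph with $k+1$ or more non-isolated vertices contributes at most $O(\delta^{k+1})$ in absolute value. So it suffices to choose $\lambda_1,\lambda_2,\lambda_3$ with $\sum_i\lambda_i=0$ making the displayed sum strictly positive, and then take $\delta$ small; Lemma~\ref{lem:expansionLem}'s multicolour analogue (or a direct expansion) then shows the relevant expression is negative, so the tuple is not common.

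The main obstacle, and the step I would think about most carefully, is producing such $\lambda_i$: one needs $\lambda_1+\lambda_2+\lambda_3=0$ yet $\sum_i c_k(H_i)\lambda_i^k/(e(H_i)p_i^{k-1})>0$. Because $k$ is odd, $\lambda\mapsto\lambda^k$ is odd, so the coefficients $a_i:=c_k(H_i)/(e(H_i)p_i^{k-1})\ge 0$ (with $a_i>0$ for at least one $i$, namely any $i$ achieving the minimum girth $k$) can be exploited: e.g.\ if $a_1>0$, take $\lambda_1=2$, $\lambda_2=\lambda_3=-1$ (rescaled to be small), giving $\sum_i a_i\lambda_i^k = 2^k a_1 - a_2 - a_3$, which is positive once we instead rescale so the negative $\lambda$'s are comparatively tiny — or more robustly, put all the ``mass'' on one index, say $\lambda_1=2\eta$, $\lambda_2=-\eta$, $\lambda_3=-\eta$ and then further shrink $\eta$; since $k\ge3$, $2^k>2$ and the positive term dominates. (If the minimum girth is attained by more than one $H_i$ one has even more freedom.) Once the sign of the leading term is secured, the remaining estimates are exactly the routine $\delta\to0$ bookkeeping already carried out in the proof of Theorem~\ref{th:Girth}, extended termwise over the three colours. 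Finally, I would note that Lemma~\ref{lem:kCol} then upgrades this to a necessary condition for $(p_1,\dots,p_k)$-commonness for every $k\ge3$, as remarked before the statement.
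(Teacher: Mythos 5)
Your strategy is essentially the paper's: the same kernel $B$, the same scaling $B^\delta$, the same use of $0$-regularity to kill all acyclic subgraphs (including the singleton terms), and the same identification of the $C_k$-terms as the leading order in $\delta$; indeed your choice $(\lambda_1,\lambda_2,\lambda_3)=(2\eta,-\eta,-\eta)$ is literally the paper's $(2p,-p,-p)$ with $p=\tfrac12\min_i p_i$. The one genuine gap is in how you secure the sign of the leading coefficient. Writing $a_i=c_k(H_i)/(e(H_i)p_i^{k-1})$, your choice gives $\sum_i a_i\lambda_i^k=\eta^k(2^ka_1-a_2-a_3)$, and you justify positivity only from $a_1>0$ (i.e.\ from $H_1$ attaining the minimum girth). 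That is not enough: if, say, $a_1$ is tiny and $a_2$ is large (both $H_1$ and $H_2$ having girth $k$), then $2^ka_1-a_2-a_3<0$. Your proposed fix of ``rescaling so the negative $\lambda$'s are comparatively tiny'' is not available, since it destroys the constraint $\lambda_1+\lambda_2+\lambda_3=0$ that you need for $\sum_iW_i=1$; and shrinking $\eta$ only multiplies the whole expression by $\eta^k$ without changing its sign. The correct (and easy) repair is to place the doubled weight on the index \emph{maximizing} $a_i$: then $a_2+a_3\le 2a_1<2^ka_1$, and this maximum is positive because some $H_i$ has $c_k(H_i)\ge1$. This is exactly what the paper's normalization \eqref{eq:ckbound} accomplishes. (Your parenthetical that having several $H_i$ attain the minimum girth gives ``even more freedom'' has it backwards: that is precisely the case where the choice of doubled index matters.) With that one correction, the rest of your argument --- the $O(\delta^{k+1})$ bound on subgraphs with more than $k$ non-isolated vertices via Lemma~\ref{lem:scaleDown}, followed by taking $\delta$ small --- goes through exactly as in the proof of Theorem~\ref{th:Girth}.
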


\begin{proof}
Let $k=\min\{g(H_1),g(H_2),g(H_3)\}$. Without loss of generality, we can assume that 
\begin{equation}\label{eq:ckbound}\frac{c_k(H_1)}{e(H_1)p_1^{k-1}}\geq \frac{c_k(H_2)}{e(H_2)p_2^{k-1}}\geq \frac{c_k(H_3)}{e(H_3)p_3^{k-1}}.\end{equation}
For $i\in\{1,2,3\}$, let $b_i$ be the number of subgraphs of $H_i$ with at least $k+1$ non-isolated vertices. 

Now, let $B$ be the kernel as in the proof of Theorem~\ref{th:Girth}, let $p=\frac{1}{2}\min\{p_1,p_2,p_3\}$, let $\delta$ be a small real number to be specified later and define $U_1=2p\cdot B^\delta$ and $U_2=U_3=-p\cdot B^\delta$. Define $W_1=p_1+U_1$ and $W_2=W_3=p_2+U_2$. By Lemma~\ref{lem:expansion},
\begin{equation}\label{eq:expandKCols}
\sum_{i=1}^3\frac{t(H_i,W_i)}{e(H_i)p_i^{e(H_i)-1}} - \sum_{i=1}^3\frac{p_i}{e(H_i)}
= \sum_{i=1}^3\sum_{\substack{E\subseteq E(H_i) \\ E \ne \emptyset}}\frac{t(H_i[E],U_i)}{e(H_i)p_i^{|E|-1}}.
\end{equation}
Since $U_1,U_2$ and $U_3$ are all $0$-regular, we get that $t(F,U_i)=0$ for any acyclic graph $F$ and $i\in\{1,2,3\}$. In particular, $t(H_i[E],U_i)=0$ whenever $H_i[E]$ has at most $k-1$ non-isolated vertices. If $H_i[E]$ has exactly $k$ non-isolated vertices, then $t(H_i[E],U_i)$ is non-zero if and only if $H_i[E]$ is a cycle of length $k$. Also, as in the proof of Theorem~\ref{th:Girth}, we have $t(C_k,U_1)=-(2p)^k\delta^k$ and $t(C_k,U_2)=t(C_k,U_3)=p^k\delta^k$. If $F$ has at least $k+1$ non-isolated vertices, then $t(F,U_i)\leq \delta^{k+1}$ for all $i\in\{1,2,3\}$ by Lemma~\ref{lem:scaleDown}. Letting $b_i$ be the number of subgraphs of $H_i$ with at least $k+1$ non-isolated vertices, the expression on the right side of \eqref{eq:expandKCols} is therefore bounded above by
\[\sum_{i=1}^3 \frac{c_k(H_i)\cdot t(C_k,U_i)}{e(H_i)p_i^{k-1}}+ \sum_{i=1}^3\frac{b_i\delta^{k+1}}{e(H_i)p_i^{|E(H_i)|-1}}\]
\[= \left(\frac{-c_k(H_1)(2p)^k}{e(H_1)p_1^{k-1}} + \frac{c_k(H_2)p^k}{e(H_2)p_2^{k-1}} + \frac{c_k(H_3)p^k}{e(H_3)p_3^{k-1}}\right)\delta^k+ \sum_{i=1}^3\frac{b_i\delta^{k+1}}{e(H_i)p_i^{|E(H_i)|-1}}.\]
Now, by \eqref{eq:ckbound} and the fact that $2^k>2$, we see that, if $\delta$ is chosen sufficiently small with respect to $H_1,H_2,H_3,p_1,p_2$ and $p_3$, the above expression is negative. 
\end{proof}

\begin{rem}
\label{rem:localKCol}
Analogously to Remark~\ref{rem:local}, we remark that the proof of Theorem~\ref{th:GirthKCol} can be adapted to show that $(H_1,H_2,H_3)$ is not $(p_1,p_2,p_3)$-common in a local sense by simply scaling $U_1,U_2$ and $U_3$ by a factor of $\varepsilon$ for some arbitrarily small $\varepsilon$ and letting $\delta$ depend additionally on $\varepsilon$. After applying this modification, Theorem~\ref{th:GirthKCol} now generalizes~\cite[Theorem~3]{Kral+22}.
\end{rem}

\section{Conclusion}
\label{sec:concl}

We conclude the paper by stating several open problems. We start by repeating the four open problems stated in the introduction.

\repeatConj{Interval}

\repeatQues{Sid}

\repeatProb{CFourCFive}

\repeatConj{KFourBarrier}

Our last two open problems concern the strong barrier to commonality posed by $K_4$ subgraphs. It seems to us that $K_4$ may be the only subgraph which automatically prevents a graph from being common. 

\begin{conj}
\label{conj:K4exceptional}
For any $K_4$-free graph $F$, there exists a connected common graph $H$ containing $F$ as a subgraph. There is also a connected common graph $H$ containing $F$ as an induced subgraph. 
\end{conj}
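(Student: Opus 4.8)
The ``only if'' direction is already known: taking $H_1=H_2=H$ and $p_1=p_2=1/2$ in Theorem~\ref{th:KFour} shows that every common graph is $K_4$-free, so the conjecture is a converse, and I would prove it separately for bipartite and non-bipartite $F$. When $F$ is bipartite with parts $A$ and $B$, the non-induced statement is immediate: $F$ is a subgraph of the complete bipartite graph $K_{|A|,|B|}$ (a star $K_{|V(F)|,1}$ if $F$ has no edges), which is connected and Sidorenko, hence common by the convexity argument recalled in the introduction, so here $H=K_{|A|,|B|}$ works.

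The induced bipartite case is the first genuinely hard point, since now $H$ may add no edge inside $V(F)$, so complete bipartite host graphs no longer help. My plan would be to attach a gadget to $F$ through new vertices only: for instance, let $H$ be obtained from $F$ by adding, for every non-adjacent pair $a\in A$, $b\in B$, a new vertex of degree two joined to $a$ and $b$ (if $F$ is already complete bipartite it is induced in itself and we are done). One checks easily that $H$ is connected, has $F$ as an induced subgraph, and is in fact triangle-free --- a fortiori $K_4$-free --- because each new vertex lies on no triangle (its two neighbours are non-adjacent) and $F$ is bipartite. The trouble is that $H$ picks up odd cycles through the new vertices, so it is not Sidorenko and we have no off-the-shelf reason for it to be common; establishing this for arbitrary $F$ seems to require either a flag-algebra certificate in the spirit of Section~\ref{sec:flags}, or recognising $H$ as an instance of the ``gluing'' constructions of the companion paper~\cite{SecondPaper}, and I expect this to be the crux of the whole conjecture.

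For non-bipartite $F$ (so $F$ contains an odd cycle), the case $F=C_k$ is handled directly by Sidorenko's theorem that odd cycles are common. In general one would try to embed $F$ --- as a subgraph, and then with no extra edges inside $V(F)$ for the induced version --- into a graph built from odd cycles by the gluing operations of~\cite{SecondPaper}, or into one of the common graphs of arbitrarily large chromatic number constructed by Kr\'a\v{l}, Volec and Wei~\cite{KralVolecWei22+} (such a host is needed since $K_4$-free graphs can already have unbounded chromatic number); but all of these constructions are rigid, and showing that a prescribed $K_4$-free $F$ can be absorbed into one of them is exactly what is missing.

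The fundamental obstacle is that commonality is fragile and non-monotone: adding an edge or a vertex can destroy it (a triangle is common, a triangle with a pendant edge is not; $K_3$ is common, $K_4$ is not), so there is no hope of an edge-by-edge inductive completion, and the only flexible mechanism we have for certifying commonality --- Sidorenko's inequality --- applies only to bipartite graphs and, as noted above, does not obviously yield induced containment. I therefore expect the decisive step to be the discovery of a new, sufficiently flexible family of common graphs, or a robust sufficient condition for commonality that is preserved under a natural ``padding'' operation, into which every $K_4$-free graph can be embedded as an induced subgraph.
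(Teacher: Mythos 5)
This statement is Conjecture~\ref{conj:K4exceptional}, an \emph{open conjecture}: the paper offers no proof of it, only supporting evidence, so there is no argument of the authors' to measure you against. You correctly treat it as open rather than manufacturing a complete argument, and the verifiable parts of your discussion are sound: Theorem~\ref{th:KFour} with $H_1=H_2=H$ and $p_1=p_2=1/2$ does show $K_4$-freeness is necessary; a bipartite $F$ with parts $A,B$ does sit inside the connected Sidorenko (hence common) graph $K_{|A|,|B|}$, settling the non-induced bipartite case; and the non-bipartite evidence you point to (odd cycles being common, the Kr\'a\v{l}--Volec--Wei result) is exactly the evidence the paper itself cites, namely that every graph of girth at least $50$ is an induced subgraph of a connected common graph.

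One improvement you missed for the \emph{induced} bipartite case, which the paper essentially hands you in its discussion of Conjecture~\ref{conj:K4BlowUp}: by Conlon and Lee~\cite{ConlonLee17}, every bipartite $F$ has a Sidorenko blow-up, and a blow-up contains $F$ as an \emph{induced} subgraph (pick one vertex from each class), so for connected bipartite $F$ this already gives a connected common graph containing $F$ induced --- no triangle-creating gadget of the kind you propose is needed, and such a gadget would in any case leave you with a non-bipartite host whose commonality you cannot certify, as you acknowledge. Your closing diagnosis of the difficulty (non-monotonicity of commonality, lack of a flexible certified family closed under ``padding'') is a fair description of why the general case, in particular non-bipartite $F$ of small girth, remains open.
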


Some strong evidence in favour of Conjecture~\ref{conj:K4exceptional} comes from the recent paper of Kr\'a\v{l}, Volec and Wei~\cite{KralVolecWei22+}, where it is shown that every graph of girth at least 50 is an induced subgraph of a connected common graph. 

An idea that is used in~\cite{Kral+22,KralVolecWei22+} is that the presence of large complete bipartite subgraphs in a graph $H$ tends to ``drive up'' the homomorphism density. Thus, a natural approach to Conjecture~\ref{conj:K4exceptional} is to simply consider a \emph{blow-up} of the graph $F$; that is, a graph obtained by replacing all vertices of $F$ by non-empty independent sets (of possibly different sizes) and adding all edges between two such sets if their corresponding vertices are adjacent. We conjecture that this strategy works to produce a common graph. 

\begin{conj}
\label{conj:K4BlowUp}
If $F$ is non-empty and $K_4$-free, then there exists a blow-up of $F$ which is common. 
\end{conj}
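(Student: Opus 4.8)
The plan is to prove that a sufficiently large \emph{balanced blow-up} of $F$ is common: write $F[t]$ for the graph obtained by replacing each vertex of $F$ by an independent set of size $t$ and each edge by a copy of $K_{t,t}$. This is at least consistent with Theorem~\ref{th:KFour}, since a $K_4$ in a blow-up of $F$ would have its four vertices in four pairwise adjacent parts, so $F[t]$ inherits $K_4$-freeness from $F$. Split the argument according to whether $F$ is bipartite. If $F$ is bipartite then so is every blow-up of $F$, and by the theorem of Conlon and Lee on Sidorenko's conjecture for blow-ups~\cite{ConlonLee21} there is an integer $t$ for which $F[t]$ is Sidorenko; since Sidorenko graphs are common (the $p_1=p_2=1/2$, $H_1=H_2=F[t]$ instance of Theorem~\ref{th:Sid}), this settles the bipartite case.

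So assume $F$ is non-bipartite and put $H:=F[t]$. The approach is the re-parameterize-and-expand method used throughout the paper: setting a graphon equal to $1/2+U$ and specialising Lemma~\ref{lem:expansionLem} to $p_1=p_2=1/2$ and $H_1=H_2=H$, one sees that $H$ is common if and only if
\[\sum_{\substack{S\subseteq E(H),\ |S|\ge 2\\ |S|\text{ even}}}2^{|S|}\,t(H[S],U)\ \ge\ 0\]
for every kernel $U$ with $-1/2\le U(x,y)\le 1/2$. The key structural feature is the parity constraint on $|S|$, which is exactly what remains after the red and blue contributions cancel: no subgraph with an odd number of edges --- in particular no single odd cycle --- ever contributes. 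Following the heuristic of Kr\'a\v{l}, Volec and Wei~\cite{KralVolecWei22+} and Hancock, Kr\'a\v{l}, Krnc and Volec~\cite{HancockKralKrncVolec23} that large complete bipartite subgraphs ``drive up'' homomorphism density, one expects that for $t$ large the terms with a definite positive sign and large magnitude come from subgraphs $H[S]$ built out of large complete bipartite pieces between adjacent parts of the blow-up. The relevant ``positive'' graphs $H'$ --- those with $t(H',U)\ge 0$ for every symmetric kernel $U$ --- include all even cycles (for which $t(C_{2k},U)=\sum_i\lambda_i^{2k}\ge 0$, with the $\lambda_i$ the eigenvalues of the self-adjoint Hilbert--Schmidt operator associated with $U$) and all complete bipartite graphs $K_{a,b}$, which are moreover weakly norming; within $H$ these occur with multiplicities that are polynomials in $t$ of large degree.

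The main steps would then be: (i) partition the admissible edge-subsets $S$ by the ``pattern'' they induce on $F$, namely the function recording, for each $uv\in E(F)$, how many of the $t^2$ blown-up copies of $uv$ lie in $S$; (ii) within each pattern, bound the total contribution using Cauchy--Schwarz and H\"older inequalities for kernels, using the weak-norming property of complete bipartite graphs to absorb the genuinely non-positive subgraphs --- which, by the parity constraint, must combine odd cycles in pairs or graft them onto bridges --- against the positive mass; and (iii) check that the leading term in $t$ of the resulting bound is a non-negative combination, which should hold once $t$ is large in terms of $F$. The hard part is step (ii) in the presence of short odd cycles, and especially the triangle case $F=K_3$: the strongest embedding result of this flavour, that of Kr\'a\v{l}, Volec and Wei, only handles graphs of girth at least $50$, and it is far from clear that a balanced blow-up alone carries enough positive mass when short odd cycles are abundant. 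Overcoming this would likely require a finer, flag-algebra-assisted analysis of the low-order terms (in the spirit of Section~\ref{sec:flags}), or an inductive scheme exploiting structural properties of $K_4$-free graphs; it may even be necessary to pass to unbalanced blow-ups in order to equalise the competing contributions.
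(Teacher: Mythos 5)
You should first note that the statement in question is Conjecture~\ref{conj:K4BlowUp}: the paper poses it as an open problem and offers no proof, so there is nothing to match your argument against except the authors' own remark following the conjecture. The only part of your proposal that is actually complete is the bipartite case, and there you reproduce exactly that remark: by Conlon and Lee every bipartite graph has a Sidorenko blow-up, and Sidorenko implies common via Theorem~\ref{th:Sid} (the paper cites~\cite{ConlonLee17} for the blow-up result; your citation is equally apt). Your other preliminary steps are sound: blow-ups of $K_4$-free graphs are $K_4$-free, which is the right consistency check against Theorem~\ref{th:KFour}, and the specialisation of Lemma~\ref{lem:expansionLem} to $p_1=p_2=1/2$, $H_1=H_2=H$ does reduce commonality of $H$ to the non-negativity of $\sum_{|S|\,\mathrm{even},\,|S|\ge 2}2^{|S|}t(H[S],U)$ for all admissible kernels $U$.

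The gap is the entire non-bipartite case, which is the substance of the conjecture. Steps (ii) and (iii) of your plan are not arguments but descriptions of what an argument would need to accomplish: you never produce an inequality showing that the positive mass carried by the complete-bipartite pieces of the blow-up dominates the negative contributions, and you yourself concede that the only embedding result of this type, that of Kr\'a\v{l}, Volec and Wei~\cite{KralVolecWei22+}, requires girth at least $50$ and says nothing about, e.g., $F=K_3$, where it is not even known whether some blow-up of the triangle is common. A secondary inaccuracy: it is not true that \emph{every} complete bipartite graph $K_{a,b}$ satisfies $t(K_{a,b},U)\ge 0$ for all kernels $U$ (already $K_{1,1}$ and $K_{1,3}$ fail), although the ones with an even number of edges do, since then one side has even size, and the parity restriction in your sum means only these arise. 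In short, what you have is a reasonable research plan consistent with the heuristics the authors themselves invoke, together with a correct treatment of the bipartite case; it is not a proof, and none is currently known.
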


It is known that, for every bipartite graph $F$, there exists a blow-up of $F$ that is Sidorenko~\cite{ConlonLee17}; therefore, Conjecture~\ref{conj:K4BlowUp} holds for bipartite graphs (in a strong sense). 

There has been some significant recent interest in analogues of Sidorenko's Conjecture and the notion of common graphs in other areas, such as additive combinatorics~\cite{KamcevLiebenauMorrison21,SaadWolf17,FoxPhamZhao21,KamcevLiebenauMorrison21,Versteegen21,Versteegen23}. Here, the aim is to classify equations, or systems of equations, over $\mathbb{F}_q$ with the property that the number of solutions to the system in a subset $A\subseteq \mathbb{F}_q^n$ of a given size (in the Sidorenko problem) or a set $A$ and its complement (in the commonness problem) is asymptotically minimized when $A$ is chosen randomly. It may be interesting to study asymmetric versions of such problems. 

For posterity, we remark that the early arXiv pre-print version of this paper contained additional results and open problems. After submitting to arXiv, we decided to split the paper into two: the present paper and~\cite{SecondPaper}. 

\begin{ack}
The authors would like to thank Elena Moss for helping us make the pictures of flags in Section~\ref{sec:flags}. 
\end{ack}

\bibliographystyle{plain}

\end{document}